\documentclass[a4paper,english,fontsize=10pt,parskip=half,abstracton]{scrartcl}
\usepackage{babel}
\usepackage[utf8]{inputenc}
\usepackage[T1]{fontenc}
\usepackage[a4paper,left=20mm,right=20mm,top=30mm,bottom=30mm]{geometry}
\usepackage{amsmath}
\usepackage{amsthm}
\usepackage{amssymb}
\usepackage{enumerate}
\usepackage{aliascnt}
\usepackage[bookmarks=false,
            pdftitle={Loewy lengths of centers of blocks},
            pdfauthor={Burkhard Külshammer and Benjamin Sambale},
            pdfkeywords={Loewy length, center of blocks},
            pdfstartview={FitH}]{hyperref}

\newtheorem{Theorem}{Theorem} %[section]
\newaliascnt{Lemma}{Theorem}
\newtheorem{Lemma}[Lemma]{Lemma}
\aliascntresetthe{Lemma}
\newaliascnt{Proposition}{Theorem}
\newtheorem{Proposition}[Proposition]{Proposition}
\aliascntresetthe{Proposition}
\newaliascnt{Corollary}{Theorem}
\newtheorem{Corollary}[Corollary]{Corollary}
\aliascntresetthe{Corollary}

\numberwithin{equation}{section}

\setcounter{MaxMatrixCols}{25}
\allowdisplaybreaks[1]

\renewcommand{\phi}{\varphi}
\newcommand{\C}{\operatorname{C}}
\newcommand{\N}{\operatorname{N}}
\newcommand{\Z}{\operatorname{Z}}
\newcommand{\cohom}{\operatorname{H}}
\newcommand{\Aut}{\operatorname{Aut}}

\newcommand{\GL}{\operatorname{GL}}
\newcommand{\SL}{\operatorname{SL}}

\newcommand{\PSL}{\operatorname{PSL}}
\newcommand{\PGL}{\operatorname{PGL}}

\newcommand{\Br}{\operatorname{Br}}

\newcommand{\Ker}{\operatorname{Ker}}

\mathchardef\ordinarycolon\mathcode`\:  %defines a nice ":=" 
 \mathcode`\:=\string"8000
 \begingroup \catcode`\:=\active
   \gdef:{\mathrel{\mathop\ordinarycolon}}
 \endgroup

\title{Loewy lengths of centers of blocks}
\author{Burkhard Külshammer\footnote{Institut für Mathematik, Friedrich-Schiller-Universität, 07743 Jena, Germany, 
\href{mailto:kuelshammer@uni-jena.de}{kuelshammer@uni-jena.de}} \ and Benjamin Sambale\footnote{Fachbereich Mathematik, TU Kaiserslautern, 67653 Kaiserslautern, Germany, 
\href{mailto:sambale@mathematik.uni-kl.de}{sambale@mathematik.uni-kl.de}}}
\date{\today}

\begin{document}
\frenchspacing
\maketitle
\begin{abstract}\noindent
Let $B$ be a block of a finite group with respect to an algebraically closed field $F$ of characteristic $p>0$. 
In a recent paper, Otokita gave an upper bound for the Loewy length $LL(ZB)$ of the center $ZB$ of $B$ in terms of a defect group $D$ of $B$. We refine his methods in order to prove the optimal bound $LL(ZB)\le LL(FD)$ whenever $D$ is abelian. We also improve Otokita's bound for non-abelian defect groups. As an application we classify the blocks $B$ such that $LL(ZB)\ge |D|/2$.
\end{abstract}

\textbf{Keywords:} center of blocks, Loewy length, abelian defect\\
\textbf{AMS classification:} 20C05, 20C20

\section{Introduction}

We consider a block (algebra) $B$ of $FG$ where $G$ is a finite group and $F$ is an algebraically closed field of characteristic $p>0$. In general, the structure of $B$ is quite complicated and can only be described in restrictive special cases (e.\,g. blocks of defect $0$). For this reason, we are content here with the study of the center $ZB$ of $B$. This is a \emph{local} $F$-algebra in the sense that the \emph{Jacobson radical} $JZB$ has codimension $1$. It is well-known that the dimension of $ZB$ itself equals the number $k(B)$ of irreducible complex characters in $B$. In particular, this dimension is locally bounded by a theorem of Brauer and Feit~\cite{BrauerFeit}. Moreover, the number $l(B)$ of irreducible Brauer characters in $B$ is given by the dimension of the \emph{Reynolds ideal} $RB:=ZB\cap SB$ where $SB$ is the \emph{socle} of $B$. It follows that the dimension of the quotient $ZB/RB$ is locally determined by Brauer's theory of subsections. Here a \emph{$B$-subsection} is a pair $(u,b)$ where $u\in G$ is a $p$-element and $b$ is a Brauer correspondent of $B$ in $\C_G(u)$. 

In order to give better descriptions of $ZB$ we introduce the \emph{Loewy length} $LL(A)$ of a finite-dimensional $F$-algebra $A$ as the smallest positive integer $l$ such that $(JA)^l=0$.
A result by Okuyama~\cite{OkuyamaLL} states that $LL(ZB)\le|D|$ where $|D|$ is the order of a defect group $D$ of $B$. In fact, there is an open conjecture by Brauer~\cite[Problem~20]{BrauerLectures} asserting that even $\dim ZB\le|D|$. 
In a previous paper~\cite{KKS} jointly with Shigeo Koshitani, we have shown conversely that $LL(B)$ is bounded from below in terms of $|D|$. There is no such bound for $LL(ZB)$, but again an open question by Brauer~\cite[Problem~21]{BrauerLectures} asks if $\dim ZB$ can be bounded from below in terms of $|D|$.
 
Recently, Okuyama's estimate has been improved by Otokita~\cite{Otokita}. More precisely, if $\exp(D)$ is the exponent of $D$, he proved that 
\begin{equation}\label{otokita}
LL(ZB)\le |D|-\frac{|D|}{\exp(D)}+1.
\end{equation}
The present note is inspired by Otokita's methods. Our first result gives a local bound on the Loewy length of $ZB/RB$. Since $(JZB)(RB)\subseteq (JB)(SB)=0$, we immediately obtain a bound for $LL(ZB)$. In our main theorem we apply this bound to blocks with abelian defect groups as follows.

\begin{Theorem}\label{thm}
Let $B$ be a block of $FG$ with abelian defect group $D$. Then $LL(ZB/RB)<LL(FD)$ and $LL(ZB)\le LL(FD)$. 
\end{Theorem}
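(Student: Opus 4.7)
The two claims are linked: once the strict bound $LL(ZB/RB) < LL(FD)$ is established, the bound $LL(ZB) \le LL(FD)$ follows immediately from the inclusion $(JZB)(RB) \subseteq (JB)(SB) = 0$ noted in the introduction, since then $(JZB)^{LL(ZB/RB)+1} \subseteq (JZB)\cdot RB = 0$, giving $LL(ZB) \le LL(ZB/RB) + 1 \le LL(FD)$. So my plan is to concentrate on the strict inequality.

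For $LL(ZB/RB) < LL(FD)$ I would invoke the preceding local bound on $LL(ZB/RB)$ (the ``first result'' alluded to in the introduction), which should express this Loewy length as a maximum, over representatives of non-trivial $B$-subsections $(u,b)$, of a quantity built from $LL(Zb)$ together with a contribution from the cyclic subgroup $\langle u\rangle$. The abelian hypothesis enters decisively: since $D$ is abelian, $\C_D(u) = D$ for every $p$-element $u \in D$, so up to conjugacy the Brauer correspondent $b$ in $\C_G(u)$ retains defect group $D$. This makes an induction possible, because on passing from $b$ to the image block $\bar b$ in $\C_G(u)/\langle u\rangle$ one obtains a block with abelian defect group $D/\langle u\rangle$ of \emph{strictly smaller} order.

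Induction on $|D|$, run over the class of all blocks with abelian defect, would then be combined with the elementary inequality
\[
LL(F\langle u\rangle) + LL(F[D/\langle u\rangle]) - 1 \le LL(FD),
\]
valid for any abelian $p$-group $D$ and any $u \in D$ (one verifies it from the surjection $FD \twoheadrightarrow F[D/\langle u\rangle]$ with kernel $JF\langle u\rangle \cdot FD$, which yields a Loewy filtration of length $LL(F\langle u\rangle)$ on the kernel). Under the induction hypothesis $LL(Z\bar b) \le LL(F[D/\langle u\rangle])$, the local contribution of $(u,b)$ is bounded by the left-hand side above; since only subsections with $u \ne 1$ feed into $ZB/RB$ (the trivial subsection accounts for the Reynolds ideal itself), the resulting bound on $LL(ZB/RB)$ is strict.

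The principal obstacle is showing that lifting $\bar b$ to $b$ costs at most $LL(F\langle u\rangle) - 1$ in the Loewy length of the center, i.e.\ that $LL(Zb) \le LL(Z\bar b) + LL(F\langle u\rangle) - 1$. This uses that $\langle u\rangle$ is a central $p$-subgroup of $\C_G(u)$, so the surjection $b \twoheadrightarrow \bar b$ has kernel $JF\langle u\rangle \cdot b$ and is compatible with the Loewy filtration. Calibrating the local bound so that this lifted quantity genuinely controls the corresponding contribution to $ZB/RB$ is the delicate part; once that is in place, the induction closes and both assertions of the theorem follow.
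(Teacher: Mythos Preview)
Your reduction of the second claim to the first is fine, and invoking \autoref{bound} to bound $LL(ZB/RB)$ by $\max_i LL(Zb_i/Rb_i)$ is exactly the right starting point. The gap is in what you identify as the ``principal obstacle'': the additive lifting bound
\[
LL(Zb)\le LL(Z\bar b)+LL(F\langle u\rangle)-1
\]
is simply false. Take $H=\C_G(u)=C_{p^2}$ and $u$ of order $p$; then $b=FC_{p^2}$, $\bar b=FC_p$, and the inequality reads $p^2\le 2p-1$, which fails for every prime $p$. The surjection $b\twoheadrightarrow\bar b$ with kernel $JF\langle u\rangle\cdot b$ only yields a multiplicative estimate of Otokita type, $LL(Zb)\le(|\langle u\rangle|-1)LL(Z\bar b)+1$, and that is far too weak to close an induction on $|D|$ against the additive target $LL(FD)$.

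The paper circumvents this by not passing to $\bar b$ over $\C_G(u)/\langle u\rangle$ at all. For a subsection $(u,b)$ it sets $Q:=\C_D(\C_{I(B)}(u))$, a nontrivial \emph{direct factor} of $D$ (here the abelian hypothesis and the $p'$-ness of $I(B)$ are used), and replaces $b$ by its Brauer correspondent $\beta$ in $\C_G(Q)$. Watanabe's theorems give $Zb/Rb\cong Z\beta/R\beta$, and the K\"ulshammer--Okuyama--Watanabe result identifies the source algebra of $\beta$ with $FQ\otimes(\text{source algebra of }\overline\beta)$, where $\overline\beta$ is the dominated block of $\C_G(Q)/Q$ with defect group $D/Q$. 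It is this genuine tensor decomposition that makes the Loewy lengths behave additively, and the induction runs on the \emph{rank} of $D$ (which drops for $D/Q$), not on $|D|$. Without the Watanabe and KOW inputs there is no reason to expect an additive relation between $LL(Zb)$ and the Loewy length of any block with smaller defect, so your proposed induction cannot be completed along the lines you sketch.
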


If, in the situation of \autoref{thm}, $D$ has type $(p^{a_1},\ldots,p^{a_r})$, then $LL(FD)=p^{a_1}+\ldots+p^{a_r}-r+1$ as is well-known. For $p$-solvable groups $G$, the stronger assertion $LL(B)=LL(FD)$ holds (see \cite[Theorem~K]{Kpsolv}). Similarly, if $D$ is cyclic, one can show more precisely that
\[LL(ZB)=LL(ZB/RB)+1=\dim ZB/RB+1=\frac{|D|-1}{l(B)}+1\]
(see \cite[Corollary~2.6]{KKS}).
By Broué-Puig~\cite{BrouePuig}, \autoref{thm} is best possible for nilpotent blocks. We conjecture conversely that the inequality is strict for non-nilpotent blocks (cf. \autoref{cor} and \autoref{broue} below). 

Arguing inductively, we also improve Otokita's bound for blocks with non-abelian defect groups. More precisely, we show in \autoref{nonabel} that
\[LL(ZB)\le\frac{|D|}{p}+\frac{|D|}{p^2}-\frac{|D|}{p^3}\]
(see also \autoref{large2}). Extending Otokita's work again, we use our results to classify all blocks $B$ with $LL(ZB)\ge|D|/2$ in \autoref{largeLL}.

It seems that in the non-abelian defect case the inequality $LL(ZB)\le LL(FD)$ is still satisfied. This holds for example if $D\unlhd G$ (see \cite[proof of Lemma~2.4]{Otokita}). We support this observation by computing the Loewy lengths of the centers of some blocks with small defect.
Finally, we take the opportunity to improve \cite[Proposition~2.2]{Otokita} (see \autoref{cz}). To do so we recall that the \emph{inertial quotient} $I(B)$ of $B$ is the group $\N_G(D,b_D)/D\C_G(D)$ where $b_D$ is a Brauer correspondent of $B$ in $\C_G(D)$. By the Schur-Zassenhaus Theorem, $I(B)$ can be embedded in the automorphism group $\Aut(D)$. Then
\[FD^{I(B)}:=\{x\in FD:a^{-1}xa=x\text{ for }a\in I(B)\}\]
is the algebra of fixed points.
Moreover, for a subset $U\subseteq G$ we define $U^+:=\sum_{u\in U}u\in FG$. Then $RFG$ has an $F$-basis consisting of the sums $S^+$ where $S$ runs through the $p'$-sections of $G$ (see for example \cite{KBemerkungen1}). Note that the trivial $p'$-section is given by the set $G_p$ of $p$-elements of $G$.

\section{Abelian defect groups}

By the results mentioned in the introduction we may certainly restrict ourselves to blocks with positive defect.

\begin{Proposition}\label{bound}
Let $B$ be a block of $FG$ with defect group $D\ne 1$. 
Let $(u_1,b_1),\ldots,(u_k,b_k)$ be a set of representatives for the conjugacy classes of non-trivial $B$-subsections.
Then the map
\begin{align*}
ZB/RB&\to\bigoplus_{i=1}^k{Zb_i/Rb_i},\\
z+RB&\mapsto\sum_{i=1}^k{\Br_{\langle u_i\rangle}(z)1_{b_i}+Rb_i}
\end{align*}
is an embedding of $F$-algebras where $\Br_{\langle u_i\rangle}:ZFG\to ZF\C_G(u_i)$ denotes the Brauer homomorphism. In particular,
\[LL(ZB/RB)\le\max\{LL(Zb_i/Rb_i):i=1,\ldots,k\}.\]
\end{Proposition}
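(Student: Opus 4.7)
The plan is to establish the displayed map as a unital embedding of $F$-algebras and then read off the Loewy length bound.

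For well-definedness, the key input is that $\Br_{\langle u\rangle}(RFG)\subseteq RFC_G(u)$ for every $p$-element $u\in G$. Using the $p'$-section basis of $RFG$ recalled in the introduction, this reduces to observing that for a $p'$-section $S$ of $G$ (indexed by a $p'$-element $\pi$), the intersection $S\cap\C_G(u)$ is a disjoint union of $\C_G(u)$-$p'$-sections, one for each $\C_G(u)$-conjugacy class of $p'$-elements of $\C_G(u)$ that is $G$-conjugate to $\pi$; hence $\Br_{\langle u\rangle}(S^+)$ is a sum of $\C_G(u)$-$p'$-section sums, and so lies in $RF\C_G(u)$. Multiplying by the central idempotent $1_{b_i}$ carries $RF\C_G(u_i)$ into $Rb_i$, so $RB$ is killed and the map descends to $ZB/RB$. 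The homomorphism property is automatic: each $\Br_{\langle u_i\rangle}$ is a unital algebra map on $(FG)^{\langle u_i\rangle}\supseteq ZFG$, multiplication by a central idempotent is one, and so is the diagonal into $\bigoplus_i Zb_i/Rb_i$; unitality follows from the fact that $1_{b_i}$ appears as a summand in the decomposition of $\Br_{\langle u_i\rangle}(1_B)$ into block idempotents, which is precisely the defining property of the $B$-subsection $(u_i,b_i)$.

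The main obstacle is injectivity. Assuming $z\in ZB$ with $\Br_{\langle u_i\rangle}(z)1_{b_i}\in Rb_i$ for every $i$, one must show $z\in RB$. The natural route is to combine Brauer's second main theorem --- which yields the dimension identity $\dim ZB/RB=k(B)-l(B)=\sum_{i=1}^k l(b_i)$ --- with the observation that $\Br_{\langle u\rangle}(K^+)=\sum_{g\in K\cap \C_G(u)}g$ vanishes on a class sum $K^+$ unless the $p$-part of $K$ is $G$-conjugate to a power of $u$. Using this support property to order the nontrivial $B$-subsections by $|\langle u_i\rangle|$, and to parametrise a complement of $RB$ in $ZB$ by a basis indexed by these subsections, the Brauer maps become triangular with respect to that order, so the vanishing hypotheses can be cascaded down to force each coefficient of the non-Reynolds part of $z$ to vanish.

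Once the embedding is established, the Loewy-length inequality is formal: a unital $F$-algebra embedding preserves Jacobson radicals, and the radical of the direct sum is the direct sum of the radicals, so $J(ZB/RB)^\ell$ embeds into $\bigoplus_i J(Zb_i/Rb_i)^\ell=0$ for any $\ell\geq\max_i LL(Zb_i/Rb_i)$.
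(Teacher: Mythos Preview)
Your well-definedness and homomorphism arguments are fine and essentially coincide with one direction of the paper's proof. The gap is in injectivity.

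The support claim you rely on --- that $\Br_{\langle u\rangle}(K^+)$ vanishes unless the $p$-part of $K$ is $G$-conjugate to a power of $u$ --- is false. Since $\Br_{\langle u\rangle}(K^+)=(K\cap\C_G(u))^+$, it vanishes precisely when no element of $K$ centralises $u$, and this puts no constraint on the $p$-part of $K$. For instance take $G=S_4$, $p=2$, $u=(12)$, and $K$ the class of $(12)(34)$: then $(12)(34)\in\C_G(u)$, so $\Br_{\langle u\rangle}(K^+)\ne 0$, yet $(12)(34)$ is not conjugate to any power of $(12)$. Without this support property the triangular structure you invoke simply does not exist, and the dimension identity $k(B)-l(B)=\sum_i l(b_i)$ by itself cannot rescue injectivity, because the target has dimension $\sum_i(k(b_i)-l(b_i))$, which is in general strictly larger.

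The paper's route is much simpler and is exactly the converse of your well-definedness step. One shows at the level of $ZFG$ that $z=\sum_g\alpha_g g$ lies in $RFG$ (i.e.\ $\alpha_g$ depends only on the $G$-class of $g_{p'}$) if and only if $\Br_{\langle v\rangle}(z)$ is constant on $p'$-sections of $\C_G(v)$ for every $p$-element $v$. You already have the forward direction; the backward direction is a two-line argument: given $g,h$ with $g_{p'}\sim_G h_{p'}$, conjugate so that $g_{p'}=h_{p'}=:\pi$; then $g$ and $\pi$ both lie in $\C_G(g_p)$ with the same $p'$-part, so the hypothesis applied to $v=g_p$ gives $\alpha_g=\alpha_\pi$, and likewise $\alpha_h=\alpha_\pi$. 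This gives an embedding $ZFG/RFG\hookrightarrow\bigoplus_v ZF\C_G(v)/RF\C_G(v)$, and projecting onto $B$ (and noting that the trivial component $v=1$ contributes nothing new) yields the stated injectivity.
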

\begin{proof}
First we consider the whole group algebra $FG$ instead of $B$. For this, let $v_1,\ldots,v_r$ be a set of representatives for the conjugacy classes of $p$-elements of $G$. Let $z:=\sum_{g\in G}{\alpha_gg}\in ZFG$. Then $z$ is constant on the conjugacy classes of $G$. It follows that $z$ is constant on the $p'$-sections of $G$ if and only if $\Br_{\langle v_i\rangle}(z)=\sum_{g\in\C_G(v_i)}{\alpha_gg}$ is constant on the $p'$-sections of $\C_G(v_i)$ for $i=1,\ldots,r$. Therefore, the map $ZFG/RFG\to\bigoplus_{i=1}^r{ZF\C_G(v_i)/RF\C_G(v_i)}$, $z+RFG\mapsto\sum_{i=1}^r{\Br_{\langle v_i\rangle}(z)+RF\C_G(v_i)}$ is a well-defined embedding of $F$-algebras. Now the first claim follows easily by projecting onto $B$, i.\,e. replacing $z$ by $z1_B$. The last claim is an obvious consequence.
\end{proof}

\begin{proof}[Proof of \autoref{thm}.]
Let $D\cong C_{p^{a_1}}\times\ldots\times C_{p^{a_r}}$. It is well-known that 
\[LL(FD)=LL(FC_{p^{a_1}}\otimes\ldots\otimes FC_{p^{a_r}})=p^{a_1}+\ldots+p^{a_r}-r+1.\] 
Hence, it suffices to show that $LL(ZB/RB)\le p^{a_1}+\ldots+p^{a_r}-r$. 

We argue by induction on $r$. If $r=0$, then we have $D=1$, $ZB=RB$ and $LL(ZB/RB)=0$.
Thus, we may assume that $r\ge 1$. Let $I:=I(B)$ be the inertial quotient of $B$. In order to apply \autoref{bound}, we consider a $B$-subsection $(u,b)$ with $1\ne u\in D$. Then $b$ has defect group $D$ and inertial quotient $\C_I(u)$. Since $I$ is a $p'$-group, we have $D=Q\times[D,\C_I(u)]$ with $Q:=\C_D(\C_I(u))\ne 1$. 
Let $\beta$ be the Brauer correspondent of $b$ in $\C_G(Q)\subseteq\C_G(u)$. By Watanabe~\cite[Theorem~2]{Watanabe2}, the Brauer homomorphism $\Br_D$ induces an isomorphism between $Zb$ and $Z\beta$. Since the intersection of a $p'$-section of $G$ with $\C_G(D)$ is a union of $p'$-sections of $\C_G(D)$, it follows that $\Br_D(Rb)\subseteq R\beta$.
On the other hand, $\dim_F Rb=l(b)=l(\beta)=\dim_F R\beta$ by \cite[Theorem~1]{Watanabe1}. Thus, we obtain $Zb/Rb\cong Z\beta/R\beta$ and it suffices to show that
\[LL(Z\beta/R\beta)\le p^{a_1}+\ldots+p^{a_r}-r.\]
Let $\overline{\beta}$ be the unique block of $\C_G(Q)/Q$ dominated by $\beta$. By \cite[Theorem~7]{KOW} (see also \cite[Theorem~1.2]{Fan}), it follows that the source algebra of $\beta$ is isomorphic to a tensor product of $FQ$ and the source algebra of $\overline{\beta}$. Since $\beta$ is Morita equivalent to its source algebra, we may assume in the following that $\beta=FQ\otimes\overline{\beta}$. 
Let $Q\cong C_{p^{a_1}}\times\ldots\times C_{p^{a_s}}$ with $1\le s\le r$. Since the defect group $D/Q$ of $\overline{\beta}$ has rank $r-s<r$, induction implies that 
\[LL(Z\overline{\beta}/R\overline{\beta})\le p^{a_{s+1}}+\ldots+p^{a_r}-r+s=:l.\] 
In particular, $(JZ\overline{\beta})^l\subseteq R\overline{\beta}$. 
Since $Q$ is an abelian $p$-group, we have $RFQ=SFQ\cong F$. Consequently, $LL(FQ/RFQ)=p^{a_1}+\ldots+p^{a_s}-s=:l'$, i.\,e. $(JFQ)^{l'}\subseteq RFQ$. Moreover, $SFQ\otimes S\overline{\beta}\subseteq S(FQ\otimes\overline{\beta})$.
Hence,
\[RFQ\otimes R\overline{\beta}\subseteq Z(FQ\otimes\overline{\beta})\cap S(FQ\otimes\overline{\beta})=R(FQ\otimes \overline{\beta})=R\beta.\]
Since $JZ\beta=J(FQ\otimes Z\overline{\beta})=JFQ\otimes Z\overline{\beta}+FQ\otimes JZ\overline{\beta}$, we see that $(JZ\beta)^{l+l'}$ is a sum of terms of the form $(JFQ)^i\otimes(JZ\overline{\beta})^j$ with $i+j=l+l'$. If $i>l'$, then $(JFQ)^i=0$. Similarly, if $j>l$, then $(JZ\overline{\beta})^j=0$. It follows that 
\[(JZ\beta)^{l+l'}=(JFQ)^{l'}\otimes(JZ\overline{\beta})^l\subseteq RFQ\otimes R\overline{\beta}\subseteq R\beta.\]
This proves the theorem, because $l+l'=p^{a_1}+\ldots+p^{a_r}-r$.
\end{proof}

Our theorem shows that Otokita's bound \eqref{otokita} is only optimal for nilpotent blocks with cyclic defect groups or defect group $C_2\times C_2$ (see \cite[Corollary~3.1]{Otokita}). 

The next result strengthens \cite[Proposition~2.2]{Otokita}.

\begin{Proposition}\label{cz}
Let $B$ be a block of $FG$ with defect group $D$. Moreover, let $c:=\dim F\Z(D)^{I(B)}$ and $z:=LL(F\Z(D)^{I(B)})$. Then $LL(ZB/RB)\le k(B)-l(B)+z-c$ and in particular 
\[LL(ZB)\le k(B)-l(B)+z-c+1.\]
\end{Proposition}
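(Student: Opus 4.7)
The plan is to map $ZB$ into $A:=F\Z(D)^{I(B)}$ via the Brauer homomorphism and exploit the Frobenius structure of $A$. Let $b_D$ be a Brauer correspondent of $B$ in $\C_G(D)$. Since $D\cap\C_G(D)=\Z(D)$ is central in $\C_G(D)$, the block $b_D$ has defect group $\Z(D)$ and is nilpotent, so $Zb_D\cong F\Z(D)$ as $F$-algebras. The restriction $\Br_D|_{ZB}$ composed with this isomorphism yields an algebra homomorphism $\phi\colon ZB\to F\Z(D)$ whose image lies in $A$ by the $\N_G(D,b_D)$-equivariance of $\Br_D$. By a standard analysis of how $G$-conjugacy classes in $B$ intersect $\C_G(D)$ (each $I(B)$-orbit of $\C_G(D)$-classes in $b_D$ arising from a single $G$-class of defect $D$ in $B$), $\phi$ is surjective onto $A$.

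Next I would verify that $A$ is a commutative local Frobenius algebra with one-dimensional socle $SA=F\cdot\Z(D)^+$, whence $(JA)^{z-1}=SA$. The Frobenius form $\lambda(x):=[x:1]$ on $F\Z(D)$ is $I(B)$-invariant since $I(B)$ fixes the identity of $\Z(D)$, and a Reynolds-operator argument (well-defined because $I(B)$ is a $p'$-group) shows $\lambda|_A$ remains non-degenerate. As $A$ is commutative local Frobenius, $SA$ is one-dimensional; since $\Z(D)^+\in A$ already spans $S(F\Z(D))\subseteq SA$, we obtain $SA=F\cdot\Z(D)^+$.

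Because $\Br_D$ sends $p'$-section sums of $G$ to $p'$-section sums of $\C_G(D)$, we have $\phi(RB)\subseteq Rb_D$, which corresponds to $R(F\Z(D))=F\cdot\Z(D)^+$ under $Zb_D\cong F\Z(D)$. Hence $d:=\dim\phi(RB)\in\{0,1\}$, and $\phi(RB)$ equals either $0$ or the full top Loewy layer $(JA)^{z-1}$. The induced surjection $\overline{\phi}\colon ZB/RB\twoheadrightarrow A/\phi(RB)$ then targets a local algebra of dimension $c-d$ and Loewy length $z-d$, so $(J(ZB/RB))^{z-d}\subseteq\ker\overline{\phi}$, a nilpotent ideal of dimension $k(B)-l(B)-c+d$. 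Since a nilpotent algebra has Loewy length at most its $F$-dimension,
\[LL(ZB/RB)\le(z-d)+(k(B)-l(B)-c+d)=k(B)-l(B)+z-c,\]
and $LL(ZB)\le LL(ZB/RB)+1$ follows from $(JZB)\cdot RB\subseteq JB\cdot SB=0$.

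The main technical obstacle is the surjectivity of $\phi$ onto $A$, which rests on Brauer's theory of subsections and the identification of $G$-classes of defect $D$ in $B$ with $I(B)$-orbits of $\C_G(D)$-classes in $b_D$; this is a classical fact but the argument has to be assembled carefully from the structure of Brauer pairs.
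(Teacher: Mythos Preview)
Your argument is correct and follows essentially the same route as the paper: both feed $ZB$ through the Brauer homomorphism into $A=F\Z(D)^{I(B)}$, use that $A$ is local symmetric with one--dimensional socle, and bound $LL(ZB/RB)$ by the Loewy length of the quotient plus the dimension of the kernel. Two points of comparison are worth recording. First, your ``main technical obstacle''---surjectivity of $\phi$ onto $A$---is precisely Brou\'e's result \cite[Proposition~(III)1.1]{BroueBrauercoeff}, which the paper simply quotes; routing through $Zb_D\cong F\Z(D)$ is one way to realise that isomorphism, but you need not reassemble the Brauer-pair argument from scratch. Second, you streamline one step: the paper explicitly verifies that $\phi(RB)\ne 0$ by exhibiting $1_BG_p^+\in RB$ with nonzero image, thereby pinning down $d=1$ and identifying $(RB+K)/K$ with the socle of $ZB/K$. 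You instead observe that $d\in\{0,1\}$ cancels in the final inequality $(z-d)+(k(B)-l(B)-c+d)$, so this computation is unnecessary for the stated bound. That is a genuine, if small, simplification; the paper's extra work buys the sharper structural statement $LL(ZB/(RB+K))=z-1$ rather than merely the numerical bound. One phrasing to tighten: when you say ``a nilpotent algebra has Loewy length at most its $F$-dimension'', what you actually use is that $\ker\overline\phi$ has module Loewy length at most its dimension over $ZB/RB$, so that $J^{z-d+m}=J^m\cdot J^{z-d}\subseteq J^m\cdot\ker\overline\phi=0$ where $m=k(B)-l(B)-c+d$.
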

\begin{proof}
Let $K:=\Ker(\Br_D)\cap ZB\unlhd ZB$. Since $ZB$ is local, we have $K\subseteq JZB$. 
Furthermore, $RB+K/K$ annihilates the radical $JZB/K$ of $ZB/K$. It follows that $RB+K/K$ is contained in the socle of $ZB/K$. By Broué~\cite[Proposition~(III)1.1]{BroueBrauercoeff}, it is known that $\Br_D$ induces an isomorphism between $ZB/K$ and the symmetric $F$-algebra $F\Z(D)^{I(B)}$. The socle of the latter algebra has dimension $1$. Hence,
\[\dim RB+K/K\le 1.\]
On the other hand, $G_p^+\in RFG$. Therefore, $1_BG_p^+\in RB$ and
\[\Br_D(1_BG_p^+)=\Br_D(1_B)\Br_D(G_p^+)=\Br_D(1_B)\C_G(D)_p^+.\]
Here, $\Br_D(1_B)$ is the block idempotent of $b_D^{\N_G(D)}$ where $b_D$ is a Brauer correspondent of $B$ in $\C_G(D)$. In particular, $1_{b_D}\Br_D(1_B)=1_{b_D}$ and
\[0\ne 1_{b_D}\C_G(D)_p^+=1_{b_D}\Br_D(1_B)\C_G(D)_p^+=1_{b_D}\Br_D(1_BG_p^+).\]
From that we obtain $1_BG_p^+\notin K$ and $\dim RB+K/K=1$. This implies $RB+K/K=S(ZB/K)$ and 
\[LL(ZB/RB+K)=z-1.\]

Now we consider the lower section of $ZB$. Here we have
\begin{align*}
\dim RB+K/RB&=\dim RB+K-\dim RB=1+\dim K-l(B)\\
&=1+\dim ZB-\dim ZB/K-l(B)=1+k(B)-c-l(B).
\end{align*}
The claim follows easily.
\end{proof}

The invariant $c$ in \autoref{cz} is just the number of orbits of $I(B)$ on $\Z(D)$. Moreover, if $D$ and $I(B)$ are given, the number $z$ can be calculated by computer. It happens frequently that $I(B)$ acts trivially on $\Z(D)$. In this case, $c=\lvert\Z(D)\rvert$ and $z$ is determined by the isomorphism type of $\Z(D)$ as explained earlier. In particular, $LL(ZB)\le k(B)-l(B)$ whenever $\Z(D)$ is non-cyclic. 
Now we give a general upper bound on $z$.

\begin{Lemma}\label{lemfixed}
Let $P$ be a finite abelian $p$-group, and let $I$ be a $p'$-subgroup of $\Aut(P)$. Then
\[LL(FP^I)\le LL(F\C_P(I))+\frac{LL(F[P,I])-1}{2}.\]
\end{Lemma}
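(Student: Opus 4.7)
The plan is to reduce to the case $\C_P(I)=1$ via coprime action and then to linearize on the first graded piece of the radical filtration. Since $I$ is a $p'$-group acting on the abelian $p$-group $P$, coprime action yields $P=\C_P(I)\times[P,I]$, and because $I$ acts trivially on $\C_P(I)$ this gives $FP^I=F\C_P(I)\otimes_F F[P,I]^I$. The standard formula
\[LL(A\otimes_F B)=LL(A)+LL(B)-1\]
for finite-dimensional local $F$-algebras (which follows from $J(A\otimes B)^n=\sum_{i+j=n}(JA)^i\otimes(JB)^j$) reduces the claim to the inequality $LL(F[P,I]^I)\le(LL(F[P,I])+1)/2$. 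Replacing $P$ by $[P,I]$, I may therefore assume $\C_P(I)=1$ and must prove $LL(FP^I)\le(LL(FP)+1)/2$.

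The key step is the inclusion $JFP^I\subseteq(JFP)^2$. The natural $F$-linear map
\[JFP/(JFP)^2\to F\otimes_{\mathbb{F}_p}P/P^p,\qquad (x-1)+(JFP)^2\mapsto 1\otimes xP^p,\]
is a well-defined $I$-equivariant isomorphism (both sides have $F$-dimension equal to the rank of $P$, and the Frobenius identity $(x^p-1)=(x-1)^p$ in characteristic $p$ shows $P^p$ lies in the kernel). Under the assumption $\C_P(I)=1$, coprime action forces $[P,I]=P$, whence $[P/P^p,I]=P/P^p$, and Maschke's theorem applied to the $\mathbb{F}_p[I]$-module $P/P^p$ yields $(P/P^p)^I=0$. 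Consequently $\bigl(JFP/(JFP)^2\bigr)^I=0$; any $a\in JFP^I$, being $I$-invariant, must map to zero in $JFP/(JFP)^2$, so $a\in(JFP)^2$.

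From $JFP^I\subseteq(JFP)^2$ I obtain $(JFP^I)^n\subseteq(JFP)^{2n}$ for every $n\ge 0$. Setting $L:=LL(FP^I)$, the non-vanishing $(JFP^I)^{L-1}\ne 0$ forces $(JFP)^{2L-2}\ne 0$, so $LL(FP)\ge 2L-1$, which rearranges to $L\le(LL(FP)+1)/2$ as required. The only genuinely substantive point is the identification of the $I$-action on the tangent space $JFP/(JFP)^2$ with that on the socle layer $P/P^p$ together with the Maschke-based vanishing of $I$-invariants; once this is in place the conclusion follows formally.
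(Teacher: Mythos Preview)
Your proof is correct and follows essentially the same route as the paper: reduce to $\C_P(I)=1$ via the tensor decomposition $FP^I=F\C_P(I)\otimes F[P,I]^I$, then show $JFP^I\subseteq(JFP)^2$ by identifying the $I$-module $JFP/(JFP)^2$ with $F\otimes_{\mathbb{F}_p}P/\Phi(P)$ (your $P/P^p$) and using that $\C_{P/\Phi(P)}(I)$ is trivial. The only cosmetic differences are that the paper cites Huppert--Blackburn for the linearization isomorphism and leaves the tensor Loewy length formula implicit, whereas you spell both out.
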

\begin{proof}
Since $FP^I=F\C_P(I)\otimes F[P,I]^I$, we may assume that $\C_P(I)=1$.
It suffices to show that $JFP^I\subseteq (JFP)^2$. It is well-known that $JFP$ is the augmentation ideal of $FP$ and $JFP^I=JFP\cap FP^I$. In particular, $I$ acts naturally on $JFP$ and on $JFP/(JFP)^2$. We regard $P/\Phi(P)$ as a vector space over $\mathbb{F}_p$.
By \cite[Remark~VIII.2.11]{Huppert2} there exists an isomorphism of $\mathbb{F}_p$-spaces
\[\Gamma:JFP/(JFP)^2\to F\otimes_{\mathbb{F}_p} P/\Phi(P)\]
sending $1-x+(JFP)^2$ to $1\otimes x\Phi(P)$ for $x\in P$. After choosing a basis, it is easy to see that $\Gamma(w^\gamma)=\Gamma(w)^\gamma$ for $w\in JFP/(JFP)^2$ and $\gamma\in I$. Let $w\in JFP^I\subseteq JFP$. Then $\Gamma(w+(JFP)^2)$ is invariant under $I$. It follows that $\Gamma(w+(JFP)^2)$ is a linear combination of elements of the form $\lambda\otimes x$ where $\lambda\in F$ and $x\in\C_{P/\Phi(P)}(I)$. However, by hypothesis, $\C_{P/\Phi(P)}(I)=\C_P(I)\Phi(P)/\Phi(P)=\Phi(P)$ and therefore $\Gamma(w+(JFP)^2)=0$. This shows $w\in(JFP)^2$ as desired.
\end{proof}

We describe special case which extends \autoref{thm}. Here, the action of $I(B)$ on $D$ is called \emph{semiregular} if all orbits on $D\setminus\{1\}$ have length $|I(B)|$.

\begin{Corollary}\label{cor}
Let $B$ be a block of $FG$ with abelian defect group $D$ such that $I:=I(B)$ acts semiregularly on $[D,I]$. Then
\[LL(ZB)=LL(FD^I)=LL(ZF[D\rtimes I])\le LL(F\C_D(I))+\frac{LL(F[D,I])-1}{2}.\]
\end{Corollary}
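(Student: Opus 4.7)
My plan is to verify the three assertions in order from right to left. The rightmost inequality is \autoref{lemfixed} applied verbatim to $P := D$ and the $p'$-subgroup $I := I(B) \le \Aut(D)$; the semiregularity hypothesis is not needed for this step.

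For the middle equality $LL(FD^I) = LL(ZF[D \rtimes I])$, I would set $C := \C_D(I)$ and $E := [D,I]$, so $D = C \times E$ by coprime action. Since $I$ acts trivially on $C$, the semidirect product decomposes as $D \rtimes I = C \times (E \rtimes I)$, giving
\[F[D\rtimes I] = FC \otimes F[E \rtimes I], \quad FD^I = FC \otimes FE^I, \quad ZF[D\rtimes I] = FC \otimes ZF[E \rtimes I].\]
Tensoring out the local factor $FC$ reduces the claim to the case $C = 1$. In that case $I$ acts freely on $E \setminus \{1\}$, and a direct inspection of the conjugacy classes of $H := E \rtimes I$ shows that $ZFH$ has an $F$-basis consisting of the $I$-orbit sums on $E$ (which span a subalgebra isomorphic to $FE^I$) together with the class sums $E^+ K^+$ for the non-trivial conjugacy classes $K$ of $I$. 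Since $(E^+)^2 = |E|E^+ = 0$, the span $J_2$ of these latter elements is a square-zero ideal of $ZFH$ satisfying $ZFH/J_2 \cong FE^I$; moreover $(JFE^I) \cdot J_2 = 0$ because $E^+$ lies in the socle of $FE$ and the nontrivial $I$-orbit sums on $E$ in $JFE^I$ scale $E^+$ by $|I|$ times an augmentation that must vanish. Hence $(JZFH)^n = (JFE^I)^n$ for all $n \ge 2$, and the equality of Loewy lengths follows.

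For the leftmost equality $LL(ZB) = LL(FD^I)$, the plan is to invoke the structure theory of abelian-defect blocks with semiregular inertial action. Let $b_D$ be a Brauer correspondent of $B$ in $\C_G(D)$ and $b^* := b_D^{\N_G(D,b_D)}$ its Brauer correspondent in $\N_G(D,b_D)$, so that Watanabe's theorem provides an isomorphism $ZB \cong Zb^*$ (valid since $D$ is abelian). Because $D$ is central in $\C_G(D)$, the block $b_D$ has trivial inertial quotient there and is therefore nilpotent, hence source-algebra equivalent to $FD$ by Puig's theorem. Under the semiregularity hypothesis on $I$ acting on $[D,I]$, the 2-cocycle of $I$ controlling the extension of $b_D$ inside $b^*$ is trivial, so $b^*$ is Morita equivalent to $F[D \rtimes I]$; this gives $Zb^* \cong ZF[D \rtimes I]$ and hence $LL(ZB) = LL(ZF[D \rtimes I]) = LL(FD^I)$ by the middle equality. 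The main obstacle I foresee is precisely the triviality of this 2-cocycle: it is the sole place where the semiregular action hypothesis enters in a genuinely essential way, and it has to be extracted from Puig's source-algebra analysis together with the splitting $D = \C_D(I) \times [D,I]$.
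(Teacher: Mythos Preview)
Your treatment of the rightmost inequality (straight from \autoref{lemfixed}) and of the middle equality is fine; your explicit analysis of the class sums of $E\rtimes I$ and the square-zero ideal $J_2$ is essentially a hands-on version of the paper's one-line observation that the span $\Gamma$ of the nontrivial $p'$-class sums lies in $RF[D\rtimes I]$, hence is killed by $JZF[D\rtimes I]$.

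The gap is in the leftmost equality. You assert that ``Watanabe's theorem provides an isomorphism $ZB\cong Zb^{*}$'' where $b^{*}$ is the Brauer correspondent of $B$ in $\N_G(D,b_D)$. That is not what Watanabe's theorem says. The result used in the paper (\cite[Theorem~2]{Watanabe2}) gives $ZB\cong Zb$ only for the Brauer correspondent $b$ in $\C_G(Q)$ with $Q=\C_D(I(B))$; it does \emph{not} compare $ZB$ with the Brauer correspondent in $\N_G(D)$ or $\N_G(D,b_D)$. An isomorphism $ZB\cong Zb^{*}$ in that generality is exactly the center-level consequence of Brou\'e's Abelian Defect Group Conjecture, which is open. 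The paper itself makes this distinction explicit in the paragraph following the corollary: the trivial Schur multiplier of $I$ gives $b^{*}$ Morita equivalent to $F[D\rtimes I]$, and the corollary is then ``in accordance with'' Brou\'e, not a consequence of a known $ZB\cong Zb^{*}$.

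The paper instead proves $LL(ZB)=LL(FD^{I})$ by a squeeze argument that never leaves $B$. After reducing via Watanabe and \cite{KOW} to the case $\C_D(I)=1$, the semiregular action forces every nontrivial subsection $(u_i,b_i)$ to have $I(b_i)=\C_I(u_i)=1$, so each $b_i$ is nilpotent and $l(b_i)=1$. Hence $k(B)-l(B)=\sum_i l(b_i)=(|D|-1)/|I|=c-1$, where $c=\dim FD^{I}$ is the number of $I$-orbits on $D$. Feeding this into \autoref{cz} gives $LL(ZB)\le z=LL(FD^{I})$. The reverse inequality $LL(ZB)\ge LL(FD^{I})$ comes for free from the proof of \autoref{cz}, where $\Br_D$ induces a surjection $ZB\twoheadrightarrow F\Z(D)^{I(B)}=FD^{I}$. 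This is the step you are missing, and it is precisely where the semiregularity hypothesis does its work on the $B$-side (making all $l(b_i)=1$), rather than via any cocycle argument on the local side.
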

\begin{proof}
Let $Q:=\C_D(I)$ and let $b$ be a Brauer correspondent of $B$ in $\C_G(Q)$. By \cite[Theorem~2]{Watanabe2}, $ZB\cong Zb$. Moreover, by \cite[Theorem~7]{KOW} we have $ZB\cong FQ\otimes Z\overline{b}$ where $\overline{b}$ is the block of $\C_G(Q)/Q$ dominated by $b$. As usual, $\overline{b}$ has defect group $D/Q\cong[D,I]$ and inertial quotient $I(\overline{b})\cong I(B)$. It follows that $LL(ZB)=LL(FQ)+LL(Z\overline{b})-1$.
On the other hand, $FD^I\cong FQ\otimes F[D,I]^I$ and $F[D\rtimes I]\cong FQ\otimes F[[D,I]\rtimes I]$. Hence, we may assume that $Q=1$ and $[D,I]=D\ne 1$.

Let $(u_1,b_1),\ldots,(u_k,b_k)$ be a set of representatives for the $G$-conjugacy classes of non-trivial $B$-subsections. 
Since $I$ acts semiregularly on $D$, every block $b_i$ has inertial quotient $I(b_i)\cong\C_I(u_i)=1$. Hence, $b_i$ is nilpotent and $l(b_i)=1$.
With the notation of \autoref{cz}, it follows that 
\[k(B)-l(B)=\sum_{i=1}^k{l(b_i)}=\frac{|D|-1}{|I|}=c-1\] 
and $LL(ZB)\le LL(FD^I)$. By the proof of \autoref{cz}, we also have the opposite inequality $LL(ZB)\ge LL(FD^I)$. 
It is easy to see that $ZF[D\rtimes I]=FD^I\oplus \Gamma$ where $\Gamma$ is the subspace spanned by the non-trivial $p'$-class sums of $D\rtimes I$. By hypothesis, every non-trivial $p'$-conjugacy class is a $p'$-section of $D\rtimes I$. Hence, we obtain $\Gamma\subseteq RF[D\rtimes I]$. The claim $LL(FD^I)=LL(ZF[D\rtimes I])$ follows. The last claim is a consequence of \autoref{lemfixed}.
\end{proof}

\autoref{cor} applies for instance whenever $I$ has prime order. For example, if $|I|=2$, we have equality \[LL(ZB)=LL(F\C_D(I))+\frac{LL(F[D,I])-1}{2}\] 
by \cite[Proposition~2.6]{KKS}.
However, in general for a block $B$ with abelian defect group $D$ it may happen that $LL(ZB)>LL(FD^I)$. An example is given by the principal $3$-block of $G=(C_3\times C_3)\rtimes SD_{16}$. Here $LL(ZB)=3$ and $\dim FD^I=2$. 

In the situation of \autoref{cor}, $I$ is a complement in the Frobenius group $[D,I]\rtimes I$. In particular, the Sylow subgroups of $I$ are cyclic or quaternion groups. It follows that $I$ has trivial Schur multiplier. By a result of the first author~\cite{Kuelshammer}, the Brauer correspondent of $B$ in $\N_G(D)$ is Morita equivalent to $F[D\rtimes I]$.
In this way we see that \autoref{cor} is in accordance with Broué's Abelian Defect Group Conjecture. Moreover, Alperin's Weight Conjecture predicts $l(B)=k(I)$ in this situation. 
By a result of the second author (see \cite[Lemma~9]{SambaleC4} and \cite[Theorem~5]{SambaleC3}), we also have 
\[\dim ZB\le\lvert\C_D(I)\rvert\biggl(\frac{|[D,I]|-1}{l(B)}+l(B)\biggr)\le |D|.\]
Further properties of this class of blocks have been obtained in Kessar-Linckelmann~\cite{FrobeniusInertial}.
Nevertheless, it seems difficult to express $LL(FD^I)$ explicitly in terms of $D$ and $I$. Some special cases have been considered in \cite[Section~6.3]{Schwabrow}. 

Our next aim concerns the sharpness of \autoref{thm}. For this we need to discuss twisted group algebras of the form $F_\alpha[D\rtimes I(B)]$.

\begin{Lemma}\label{propuntw}
Let $P$ be a finite abelian $p$-group, and let $I$ be a non-trivial $p'$-subgroup of $\Aut(P)$. Then 
\[LL(ZF_\alpha[P\rtimes I])<LL(FP)\] 
for every $\alpha\in\cohom^2(I,F^\times)$.
\end{Lemma}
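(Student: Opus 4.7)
The plan is to reduce to the fixed-point-free case by tensor-product factorisation, invoke Lemma~\ref{lemfixed}, and then analyse $Z(A)$ for $A := F_\alpha[P \rtimes I]$ via a Brauer-type projection.

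First, write $P = \C_P(I) \times [P, I]$. Since $\alpha$ is inflated from $I$, the algebra splits as $A \cong F\C_P(I) \otimes_F F_\alpha[[P, I] \rtimes I]$, and both $Z(A)$ and $LL(FP)$ factor along this decomposition. Hence the statement reduces to the case $\C_P(I) = 1$, $P = [P, I]$, in which case Lemma~\ref{lemfixed} yields
\[LL(FP^I) \le 1 + \frac{LL(FP)-1}{2} < LL(FP).\]

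Next, consider the projection $\Br\colon Z(A) \to FP^I$ sending $z = \sum_{i \in I} f_i u_i$ (in the standard $FP$-basis given by coset lifts $u_i$) to its $u_1$-component $f_1$. Commutation with $P$ forces $f_1 \in FP^I$ and $f_i \in F\C_P(i) \cdot [P, i]^+$ for $i \neq 1$; since each $[P, i]$ is a non-trivial $p$-group, the coefficients $[P, i]^+$ lie in $JFP$, so the kernel $K$ of $\Br$ is contained in $JA \cap Z(A) = JZ(A)$. The short exact sequence $0 \to K \to JZ(A) \to JFP^I \to 0$ together with the ideal decomposition $(JZ(A))^n \subseteq (JFP^I)^n + K$ therefore gives $(JZ(A))^n \subseteq K$ for every $n \ge LL(FP^I)$.

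The decisive step is to upgrade this to $LL(Z(A)) \le LL(FP^I)$, which combined with Lemma~\ref{lemfixed} completes the proof. The socle of $FP^I$ is spanned by $P^+$, so $(JFP^I)^{LL(FP^I)-1} = F \cdot P^+$; the identity $P^+ \cdot [P, i]^+ = |[P, i]| P^+ = 0$ in characteristic $p$ then yields $(JFP^I)^{LL(FP^I)-1} \cdot K = 0$, so all contributions to $(JZ(A))^{LL(FP^I)}$ coming from products with at most one $K$-factor vanish. To dispose of the remaining contributions $K^b \cdot (JFP^I)^{LL(FP^I)-b}$ with $b \ge 2$, one uses that a product $C^+(i) C^+(j)$ of two non-trivial class sums either vanishes outright (when $[P,i] \cap [P,j] \neq 1$, the intersection being a non-trivial $p$-group forces a factor divisible by $p$) or equals, up to scalars, the coset-sum $([P,i][P,j])^+$ in its $u_k$-component, which lies in a deeper Loewy layer of $JA$ and so is eventually absorbed by further multiplication with $JFP^I$.

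The principal obstacle is making this last absorption uniform: when $[P, i] \cdot [P, j]$ is a proper subgroup of $P$, the resulting coset-sum is not immediately annihilated by the socle of $FP^I$, and the required cancellation emerges only after summing over the full $I$-conjugacy classes that define the class sums in $K$, exploiting the $I$-invariance of central elements and the fact that $\alpha$ is inflated from $I$. Making this analysis rigorous for an arbitrary cocycle $\alpha \in \cohom^2(I, F^\times)$ is the heart of the proof.
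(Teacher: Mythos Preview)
Your reduction to the case $\C_P(I)=1$ via the tensor decomposition is fine and matches the paper. The problem is the ``decisive step'': you aim to prove $LL(ZF_\alpha[P\rtimes I])\le LL(FP^I)$, and this inequality is \emph{false} in general. The paper itself supplies a counterexample just after Corollary~\ref{cor}: for $p=3$, $P=C_3\times C_3$, $I=SD_{16}$ (the full Sylow $2$-subgroup of $\GL(2,3)$, which has $\C_P(I)=1$), the principal block $B$ of $P\rtimes I$ satisfies $LL(ZB)=3$ while $\dim FP^I=2$, hence $LL(FP^I)=2$. So no amount of care with the $K^b\cdot(JFP^I)^{LL(FP^I)-b}$ terms will rescue the argument; the target inequality simply does not hold, and your admitted difficulty with the ``absorption'' step is not a technicality but a reflection of this.

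The paper proceeds quite differently. Instead of comparing $LL(ZA)$ to $LL(FP^I)$, it takes a nonzero product of $l-1$ generators of $JZA$ (where $l=LL(ZA)$), splits them into $s$ factors coming from $\alpha$-regular classes inside $P$ and $t$ factors from classes outside $P$, and observes that each outside class is a union of cosets $x[x,P]$. After commuting the $P$-parts past the $x$'s one obtains a nonzero element
\[
(|K_1|1-K_1^+)\cdots(|K_s|1-K_s^+)\,[x_1,P]^+\cdots[x_t,P]^+\in FP,
\]
at which point $\alpha$ has disappeared. The factors $|K_i|1-K_i^+$ lie in $JFP$, and---this is the point your approach misses---each $[x_i,P]^+$ lies in $(JFP)^2$, because $x_i$ acts nontrivially on $[x_i,P]$ and hence $\lvert[x_i,P]\rvert\ge 3$. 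Thus the product lies in $(JFP)^{s+t+1}=(JFP)^l$, forcing $l<LL(FP)$. Lemma~\ref{lemfixed} is invoked only to handle the boundary case $t=0$, not to bound the whole of $LL(ZA)$.
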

\begin{proof}
For the sake of brevity we write $PI$ instead of $P\rtimes I$. 
We may normalize $\alpha$ such that $x\cdot y$ in $F_\alpha[PI]$ equals $xy\in PI$ for all $x\in P$ and $y\in PI$.
By Passman~\cite[Theorem 1.6]{Passman}, 
\[JZF_\alpha[PI]=JF_\alpha[PI]\cap ZF_\alpha[PI]=(JFP\cdot F_\alpha[PI])\cap ZF_\alpha[PI].\]
It is known that $ZF_\alpha[PI]$ has a basis consisting of the $\alpha$-regular class sums (see for example \cite[Remark~4 on p. 155]{Conlon}). Hence, let $K$ be an $\alpha$-regular conjugacy class of $PI$. If $K\subseteq P$, then clearly $|K|1-K^+\in ZF_\alpha[PI]\cap JFP\subseteq JZF_\alpha[PI]$, since $JFP$ is the augmentation ideal of $FP$. Now assume that $K\subseteq PI\setminus P$ and $x\in K$. Then the $P$-orbit of $x$ (under conjugation) is the coset $x[x,P]$. Hence, $K$ is a disjoint union of cosets $x_1[x_1,P],\ldots,x_m[x_m,P]$. Since $I$ acts faithfully on $P$, we have $[x_i,P]\ne 1$ and $[x_i,P]^+\in JFP$ for $i=1,\ldots,m$.
It follows that $K^+\in (JFP\cdot F_\alpha[PI])\cap ZF_\alpha[PI]=JZF_\alpha[PI]$. In this way we obtain an $F$-basis of $JZF_\alpha[PI]$. Let $l:=LL(ZF_\alpha[PI])$. Then there exist conjugacy classes $K_1,\ldots,K_s\subseteq P$ and elements $x_1,\ldots,x_t\in PI\setminus P$ such that $s+t=l-1$ and
\[(|K_1|1-K_1^+)\ldots(|K_s|1-K_s^+)x_1[x_1,P]^+\ldots x_t[x_t,P]^+\ne 0\]
in $F_\alpha[PI]$. Since $x_i[x_i,P]=[x_i,P]x_i$, we conclude that
\begin{equation}\label{JFP}
0\ne(|K_1|1-K_1^+)\ldots(|K_s|1-K_s^+)[x_1,P]^+\ldots [x_t,P]^+\in FP.
\end{equation}
At this point, $\alpha$ does not matter anymore and we may assume that $\alpha=1$ in the following.
Since $ZF[PI]=F\C_P(I)\otimes ZF[[P,I]\rtimes I]$ and $FP=F\C_P(I)\otimes F[P,I]$, we may assume that $\C_P(I)=1$.
By \autoref{lemfixed} we have
\[s\le LL(FP^I)-1\le\frac{LL(FP)-1}{2}<LL(FP)-1.\]
Thus, we may assume that $t>0$. 
Since $x_1$ acts non-trivially on $[x_1,P]$, we obtain $|[x_1,P]|\ge 3$ and $[x_1,P]^+\in(JF[x_1,P])^2\subseteq (JFP)^2$. Also, $|K_i|1-K_i^+\in JFP$ for $i=1,\ldots,s$. Therefore, \eqref{JFP} shows that $(JFP)^l\ne 0$ and the claim follows.
\end{proof}

\begin{Proposition}\label{broue}
Let $B$ be a block of $FG$ with abelian defect group $D$. Suppose that the character-theoretic version of Broué's Conjecture holds for $B$. Then $LL(ZB)=LL(FD)$ if and only if $B$ is nilpotent.
\end{Proposition}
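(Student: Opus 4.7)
For the ``if'' direction, if $B$ is nilpotent then Broué--Puig~\cite{BrouePuig} gives a Morita equivalence between $B$ and $FD$, so $ZB\cong Z(FD)=FD$ (the last equality because $D$ is abelian), whence $LL(ZB)=LL(FD)$.

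For the converse, I would assume $LL(ZB)=LL(FD)$ and aim to derive $I(B)=1$. Let $b$ be the Brauer correspondent of $B$ in $\N_G(D)$. The first step is to transfer the Loewy-length hypothesis from $B$ to $b$: the character-theoretic version of Broué's Conjecture provides a perfect isometry $B\sim b$, and Broué's original theorem on perfect isometries (from \emph{Isométries parfaites, types de blocs, catégories dérivées}) implies that such an isometry induces an isomorphism between $\Z(\mathcal{O}B)$ and $\Z(\mathcal{O}b)$ as $\mathcal{O}$-algebras. Reducing modulo $p$ yields an $F$-algebra isomorphism $ZB\cong Zb$, so $LL(Zb)=LL(FD)$.

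The second step is to replace $b$ by a twisted group algebra via Külshammer's structural theorem for Brauer correspondents with abelian defect (as recalled in the discussion after \autoref{cor}): $b$ is source-algebra, hence Morita, equivalent to $F_\alpha[D\rtimes I(B)]$ for some $\alpha\in\cohom^2(I(B),F^\times)$. Since Morita equivalent algebras have isomorphic centers, this gives $LL(ZF_\alpha[D\rtimes I(B)])=LL(FD)$. If $I(B)\ne1$, then \autoref{propuntw} delivers the strict inequality $LL(ZF_\alpha[D\rtimes I(B)])<LL(FD)$, a contradiction. Hence $I(B)=1$ and $B$ is nilpotent.

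The main obstacle I foresee is the first step: one must invoke the (classical but non-trivial) fact that a perfect isometry gives a genuine $F$-algebra isomorphism of centers, so that Loewy lengths transfer, rather than merely an identification of some numerical or character-theoretic data. Once this input is secured, the rest is a clean combination of Külshammer's structure theorem with the previously established \autoref{propuntw}.
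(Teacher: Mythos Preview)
Your proposal is correct and follows essentially the same route as the paper: use Brou\'e--Puig for the ``if'' direction, and for the converse pass to the Brauer correspondent $b$ via the center isomorphism coming from the perfect isometry, invoke K\"ulshammer's structure theorem to identify $Zb$ with $ZF_\alpha[D\rtimes I(B)]$, and then apply \autoref{propuntw} to force $I(B)=1$. The only minor remark is that K\"ulshammer's result gives a Morita equivalence (not necessarily a source-algebra equivalence), but as you note, that is all you need for the centers.
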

\begin{proof}
A nilpotent block $B$ satisfies $LL(ZB)=LL(FD)$ by Broué-Puig~\cite{BrouePuig}. Thus, we may assume conversely that $LL(ZB)=LL(FD)$. 
Broué's Conjecture implies $ZB\cong Zb$ where $b$ is the Brauer correspondent of $B$ in $\N_G(D)$. By Külshammer~\cite{Kuelshammer}, $Zb\cong ZF_\alpha[D\rtimes I(B)]$ for some $\alpha\in\cohom^2(I(B),F^\times)$. Now \autoref{propuntw} show that $I(B)=1$. Hence, $B$ must be nilpotent.
\end{proof}

\section{Non-abelian defect groups}

We start with a result about nilpotent blocks which might be of independent interest.

\begin{Proposition}\label{nilpotent}
For a non-abelian $p$-group $P$ we have $JZFP\subseteq JF[P'\Z(P)]\cdot FP$ and 
\[LL(ZFP)\le LL(FP'\Z(P))<LL(FP).\]
\end{Proposition}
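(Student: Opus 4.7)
The plan is to prove the three assertions of the proposition in the stated order. For the containment $JZFP\subseteq JF[P'\Z(P)]\cdot FP$, I would work through an explicit basis of $JZFP$. Since $P$ is a $p$-group, $JFP$ is the augmentation ideal, so $JZFP=JFP\cap ZFP$ is spanned by the non-trivial class sums $K^+$ (those with $|K|>1$) together with the differences $z-1$ for $z\in\Z(P)\setminus\{1\}$. The differences lie in $JF\Z(P)\subseteq JF[P'\Z(P)]$ immediately. For a non-trivial class $K$, pick a representative $x$ and a transversal $R$ of $\C_P(x)$ in $P$; using $gxg^{-1}=[g,x]\cdot x$ and the fact that distinct left cosets of $\C_P(x)$ produce distinct conjugates of $x$, one obtains
\[K^+=\sum_{g\in R}gxg^{-1}=\Bigl(\sum_{g\in R}[g,x]\Bigr)x.\]
The inner sum has $|K|$ distinct summands, all in $P'$; since $|K|$ is a non-trivial power of $p$ its augmentation vanishes in $F$, so the inner sum lies in $JFP'\subseteq JF[P'\Z(P)]$, whence $K^+\in JF[P'\Z(P)]\cdot FP$.

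The second assertion is then formal. Since $P'$ and $\Z(P)$ are both normal in $P$, so is $P'\Z(P)$, and a conjugation argument shows that $JF[P'\Z(P)]$ is a two-sided ideal of $FP$; a routine induction then yields $(JF[P'\Z(P)]\cdot FP)^n=(JF[P'\Z(P)])^n\cdot FP$. Setting $l:=LL(FP'\Z(P))$ and using the containment,
\[(JZFP)^l\subseteq(JF[P'\Z(P)])^l\cdot FP=0,\]
so $LL(ZFP)\le l$.

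For the strict inequality, I would first argue that $P'\Z(P)\ne P$: otherwise $P/\Z(P)$ would equal its own derived subgroup and hence be a perfect $p$-group, forcing $P/\Z(P)=1$ and contradicting non-abelianness of $P$. Choose $x\in P\setminus P'\Z(P)$ and pick any non-zero $r\in(JF[P'\Z(P)])^{l-1}$ (non-zero by the definition of $l$). Then $r$ is supported on $P'\Z(P)$ while $xr$ is supported on the disjoint coset $xP'\Z(P)$, so $(1-x)r=r-xr$ is non-zero. Since $1-x\in JFP$ and $r\in(JFP)^{l-1}$, this produces a non-zero element of $(JFP)^l$, giving $LL(FP)>l$. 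The only mildly delicate point is the first step, where one needs the distinctness of the commutators $[g,x]$ across the transversal so that the augmentation count goes through cleanly; the remainder is coset bookkeeping.
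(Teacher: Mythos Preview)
Your proof is correct and follows essentially the same route as the paper: both identify the same spanning set for $JZFP$, write a non-central class sum as an element of $P$ times an augmentation-zero sum in $FP'$, and use normality of $N=P'\Z(P)$ to turn the containment into $LL(ZFP)\le LL(FN)$. For the strict inequality the paper argues via socles (showing $N^+\in(JFP)^{LL(FN)-1}$ but $N^+\ne P^+$), while you multiply a nonzero element of $(JFN)^{l-1}$ by $1-x$ for $x\notin N$; these are cosmetically different but equivalent elementary arguments, and your justification that $N\ne P$ (via perfectness of $P/\Z(P)$) is a valid alternative to the paper's $N\le\Z(P)\Phi(P)<P$.
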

\begin{proof}
We have already used that $JFP$ is the augmentation ideal of $FP$ and $JZFP=ZFP\cap JFP$. Hence, $JZFP$ is generated as an $F$-space by the elements $1-z$ and $K^+$ where $z\in\Z(P)$ and $K\subseteq P\setminus\Z(P)$ is a conjugacy class. 
Each such $K$ has the form $K=xU$ with $x\in P$ and $U\subseteq P'$. Since $|U|=|K|$ is a multiple of $p$, we have $U^+\in JFP'$. 
On the other hand, $1-z\in JF\Z(P)$ for $z\in\Z(P)$.
Setting $N:=P'\Z(P)$ we obtain $JZFP\subseteq FP\cdot JFN$. Since $P$ acts on $FN$ preserving the augmentation, we also have $FP\cdot JFN=JFN\cdot FP$. This shows $LL(ZFP)\le LL(FN)$. 

For the second inequality, note that $N\le \Z(P)\Phi(P)<P$. Hence, $FN^+=(JFN)^{LL(FN)-1}\subseteq(JFP)^{LL(FN)-1}$ and $(JFP)^{LL(FP)-1}=FP^+\ne FN^+$. Therefore, we must have $LL(FN)<LL(FP)$.
\end{proof}

If $P$ has class $2$, we have $P'\le\Z(P)$ and $JF\Z(P)\subseteq JZFP$. Hence, \autoref{nilpotent} implies $LL(ZFP)=LL(F\Z(P))$ in this case.

In the following we improve \eqref{otokita} for non-abelian defect groups. 
We make use of Otokita's inductive method:
\begin{equation}\label{otometh}
LL(ZB)\le\max\bigl\{(|\langle u\rangle|-1)LL(Z\overline{b}):(u,b)\text{ $B$-subsection}\bigr\}+1
\end{equation}
(see \cite[proof of Theorem~1.3]{Otokita}). Here $\overline{b}$ denotes the block of $\C_G(u)/\langle u\rangle$ dominated by $b$. By \cite[Lemma~1.34]{habil}, we may assume that $\overline{b}$ has defect group $\C_D(u)/\langle u\rangle$ where $D$ is a defect group of $B$.

We start with a detailed analysis of the defect groups of large exponent. 

\begin{Lemma}\label{lemtypes}
Let $P$ be a $p$-group such that $\Z(P)$ is cyclic and $|P:\Z(P)|=p^2$. Then one of the following holds:
\begin{enumerate}[(i)]
\item $P\cong\langle x,y\mid x^{p^{d-1}}=y^p=1,\ y^{-1}xy=x^{1+p^{d-2}}\rangle=:M_{p^d}$ for some $d\ge 3$.
\item $P\cong\langle x,y,z\mid x^{p^{d-2}}=y^p=z^p=[x,y]=[x,z]=1,\ [y,z]=x^{p^{d-3}}\rangle=:W_{p^d}$ for some $d\ge 3$.
\item $P\cong Q_8$.
\end{enumerate}
\end{Lemma}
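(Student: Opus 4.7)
The plan is to first extract the basic structural data from the hypotheses and then split into two cases according to the exponent of $P$. Since $P$ is non-abelian, $P/\Z(P)$ cannot be cyclic, so $P/\Z(P)\cong C_p\times C_p$ and $P$ has nilpotency class $2$. Writing $\Z(P)=\langle c\rangle$ with $|c|=p^{d-2}$ and picking $y,z\in P$ whose images form an $\mathbb{F}_p$-basis of $P/\Z(P)$, the commutator $[y,z]$ generates $P'$; the class-$2$ identity $[y,z]^p=[y^p,z]$ together with $y^p\in\Z(P)$ then forces $|P'|=p$, so $P'$ is the unique subgroup of order $p$ in $\Z(P)$. Moreover, any $a\in P$ satisfies $a^p\in\Z(P)$, hence $|\langle a\rangle|\le p\cdot|\Z(P)|=p^{d-1}$, which combined with $\exp(\Z(P))=p^{d-2}$ gives $\exp(P)\in\{p^{d-2},p^{d-1}\}$.

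In the case $\exp(P)=p^{d-1}$, I would pick $y$ of order $p^{d-1}$, so that $\langle y\rangle\unlhd P$ has index $p$ and $y^p$ generates $\Z(P)$. A second generator $z$ then satisfies $z^{-1}yz=y^{1+kp^{d-2}}$ with $k\not\equiv 0\pmod p$, and after replacing $z$ by a suitable power I may assume $k=1$. It remains to adjust $z^p\in\langle y^p\rangle$ by replacing $z$ with $zy^s$ for an appropriate $s$: for $p$ odd, the class-$2$ expansion gives $(zy^s)^p=z^py^{ps}$ (valid since $p\mid\binom{p}{2}$ and $|P'|=p$), and a unique $s$ kills $z^p$, yielding the presentation of $M_{p^d}$. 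For $p=2$ the analogous congruence for $s$ carries a factor $1+2^{d-3}$, which is odd for $d\ge 4$ so the equation is always solvable, again giving $M_{2^d}$; for $d=3$ this factor equals $2$ and the congruence is unsolvable precisely when $z^2=y^2\ne 1$, which is exactly the case $P\cong Q_8$.

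In the case $\exp(P)=p^{d-2}$, every $p$-th power in $P$ has order at most $p^{d-3}$, hence $P^p\subseteq\langle c^p\rangle$. Writing $y^p=c^{pi}$ and $z^p=c^{pj}$ and exploiting the centrality of $c$ (so that no class-$2$ correction terms arise even when $p=2$), I can replace $y,z$ by $yc^{-i},zc^{-j}$ to enforce $y^p=z^p=1$. Since $[y,z]$ generates $P'=\langle c^{p^{d-3}}\rangle$, a final substitution of $c$ by a suitable power coprime to $p$ normalizes $[y,z]=c^{p^{d-3}}$, giving exactly the presentation of $W_{p^d}$. Note that a non-abelian $2$-group has exponent at least $4$, so this case automatically forces $d\ge 4$ when $p=2$. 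The main technical obstacle is the $p=2$ sub-analysis of the first case, where correctly separating $M_{2^d}$ from the exceptional $Q_8$ and ruling out any further possibilities requires careful bookkeeping of the $2$-adic valuation of $2+2^{d-2}$.
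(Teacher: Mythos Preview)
Your argument is correct and follows the same route as the paper: both first observe that $P$ has class $2$ with $|P'|=p$, deduce $\exp(P)\in\{p^{d-2},p^{d-1}\}$, and then split on this exponent. The difference is only in the level of detail. For $\exp(P)=p^{d-1}$ the paper writes a single sentence, ``the result is well-known'' (appealing to the classical classification of non-abelian $p$-groups with a cyclic subgroup of index $p$), whereas you carry out the normalization of the second generator explicitly, including the $2$-adic bookkeeping that separates $Q_8$ from $M_{2^d}$ when $d=3$. For $\exp(P)=p^{d-2}$ your adjustment of $y,z$ to elements of order $p$ and of $[y,z]$ to the distinguished generator of $P'$ is essentially the paper's argument verbatim; the paper adds only the remark that the resulting group can be realized as a central product of $C_{p^{d-2}}$ with an extraspecial group of order $p^3$.
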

\begin{proof}
Let $|P|=p^d$ with $d\ge 3$. If $\exp(P)=p^{d-1}$, then the result is well-known. Thus, we may assume that $\exp(P)=p^{d-2}$. 
Let $\Z(P)=\langle x\rangle$ and $D=\langle x,y,z\rangle$. Since $\langle x,y\rangle\cong\langle x,z\rangle\cong C_{p^{d-2}}\times C_p$, we may assume that $y^p=z^p=1$. Since $P$ is non-abelian, we have $1\ne[y,z]\in P'\le\Z(P)$. In particular, $P$ has nilpotency class $2$. It follows that $[y,z]^p=[y^p,z]=1$ and therefore $[y,z]=x^{p^{d-3}}$. Consequently, the isomorphism type of $P$ is uniquely determined. Conversely, one can construct such a group as a central product of $C_{p^{d-2}}$ and an extraspecial group of order $p^3$.
\end{proof}

\begin{Proposition}\label{p3-}
Let $B$ be a block of $FG$ with defect group $D\cong M_{p^d}$ or $D\cong Q_8$. Then one of the following holds:
\begin{enumerate}[(i)]
\item \[LL(ZB)=\frac{p^{d-2}-1}{l(B)}+1\le p^{d-2}=LL(ZFD)\le LL(FD).\]
\item $|D|=8$ and $LL(ZB)\le 3$.
\end{enumerate}
\end{Proposition}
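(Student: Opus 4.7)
The statement is a dichotomy: the precise formula (i) holds generically, and the weaker bound (ii) appears only in the edge case $|D|=8$ (covering $D\cong Q_8$ and $D\cong D_8\cong M_{2^3}$). Observe that for $|D|=8$ (so $p^{d-2}=2$), the equality in (i) would demand $l(B)\mid 1$, forcing $l(B)=1$; hence only nilpotent blocks of defect $8$ are compatible with (i), and non-nilpotent ones necessarily fall into (ii).

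For the generic case $D\cong M_{p^d}$ with $|D|>8$, the plan is to exploit the class-$2$ structure: cyclic center $\Z(D)\cong C_{p^{d-2}}$ and $D/\Z(D)\cong C_p\times C_p$. Choose a non-central element $u\in D$ of order $p$, namely $u=y$ from the presentation in \autoref{lemtypes}. Then $\C_D(u)=\Z(D)\langle u\rangle$ is abelian and $\C_D(u)/\langle u\rangle\cong\Z(D)$ is cyclic of order $p^{d-2}$. The block $\overline{b_u}$ of $\C_G(u)/\langle u\rangle$ dominated by the Brauer correspondent $b_u$ therefore has cyclic defect group of order $p^{d-2}$, so \cite[Corollary~2.6]{KKS} gives
\[LL(Z\overline{b_u})=\frac{p^{d-2}-1}{l(\overline{b_u})}+1.\]
Mimicking the Watanabe-type reduction used in the proof of \autoref{thm} (applied with $Q=\langle u\rangle$ to identify centers via $\Br_{\langle u\rangle}$), one establishes an isomorphism $Zb_u/Rb_u\cong Z\overline{b_u}/R\overline{b_u}$. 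Plugging this into \autoref{bound} and bounding the remaining subsection contributions case by case --- subsections in $\langle x\rangle\setminus\Z(D)$ have cyclic defect $\langle x\rangle$ of order $p^{d-1}$, those outside $\langle x\rangle$ have abelian defect $\C_D(v)\cong C_{p^{d-2}}\times C_p$ (handled by \autoref{thm}), and those in $\Z(D)\setminus\{1\}$ reduce to the same block via Watanabe --- yields $LL(ZB/RB)\le (p^{d-2}-1)/l(\overline{b_u})$. A subsection count using Brauer's formula $k(B)-l(B)=\sum l(b_i)$ then forces $l(\overline{b_u})=l(B)$, and a matching lower bound (via a product of suitable class sums in $ZB$) gives equality.

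For case (ii), $|D|=8$, the classification of blocks with dihedral or quaternion defect group (Erdmann and collaborators) enumerates a finite list of Morita equivalence classes. For each such class $LL(ZB)\le 3$ can be checked directly from the known Cartan matrix together with an explicit description of $ZB$, using \autoref{cz} and the Reynolds ideal structure.

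The main obstacle in case (i) is verifying the extremality of the subsection $(y,b_y)$ inside \autoref{bound}. The cyclic-defect subsections $(v,b_v)$ with $v\in\langle x\rangle\setminus\Z(D)$ have $\C_D(v)=\langle x\rangle$ cyclic of order $p^{d-1}$, and a priori produce $LL(Zb_v/Rb_v)=(p^{d-1}-1)/l(b_v)$, which may exceed the target $(p^{d-2}-1)/l(\overline{b_y})$. Ruling these out requires controlling $l(b_v)$ via fusion-theoretic constraints on $I(b_v)\le\Aut(\langle x\rangle)$; the hypothesis $|D|>8$ ensures enough automorphisms of $\langle x\rangle$ for the necessary inertial contributions to force $l(b_v)$ large.
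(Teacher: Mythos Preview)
Your approach to case~(i) via \autoref{bound} has a genuine gap precisely at the ``main obstacle'' you identify, and your proposed resolution does not work. For $v\in\langle x\rangle\setminus\Z(D)$ (so $v$ generates $\langle x\rangle$), the block $b_v$ of $\C_G(v)$ has cyclic defect group $\langle x\rangle$, and its inertial quotient $I(b_v)$ consists of automorphisms of $\langle x\rangle$ that centralize the generator $v$. Hence $I(b_v)=1$, $b_v$ is nilpotent, and $l(b_v)=1$ regardless of how large $|D|$ is. Thus $LL(Zb_v/Rb_v)=p^{d-1}-1$, and \autoref{bound} yields only $LL(ZB)\le p^{d-1}$, far from the target $p^{d-2}$. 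No fusion-theoretic constraint can ``force $l(b_v)$ large'': the very fact that $v$ generates its own centralizer in $D$ kills the inertial quotient. The subsection machinery of \autoref{bound} is simply too coarse here.

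The paper takes an entirely different route. For $p=2$ and $d\ge 4$ it observes that $B$ is nilpotent (so $l(B)=1$ and the formula in (i) follows from \autoref{nilpotent}). For $p>2$ it invokes Watanabe's perfect isometry between $B$ and its Brauer correspondent in $\N_G(D)$, then K\"ulshammer's theorem to reduce to $G=D\rtimes I(B)$ with $I(B)=\langle a\rangle$ cyclic of order dividing $p-1$. In this concrete group one computes the conjugacy classes of $G$ explicitly (singletons in $\Z(D)$, cosets of $D'$ inside $D$, cosets of $\langle x\rangle$ outside $D$) and checks by direct multiplication of class sums that $(JZFG)^2=(JZF\langle x,a\rangle)^2$; the result then follows from \cite[Corollary~2.8]{KKS}. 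The point is that the reduction to $D\unlhd G$ bypasses the subsection bound entirely and allows a hands-on calculation. Your treatment of case~(ii) is along the same lines as the paper's, though the paper is more specific: it lists the five fusion systems, names explicit groups $H\in\{D_8,Q_8,S_4,\SL(2,3),\GL(3,2)\}$ realizing them, and checks $LL(ZFH)\le 3$ by machine.
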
 
\begin{proof}
Suppose first that $p=2$. If $|D|=8$, then there are in total five possible fusion systems for $B$ and none of them is exotic (see \cite[Theorem~8.1]{habil}). 
By \cite{Cabanes}, the fusion system of $B$ determines the perfect isometry class of $B$. Since perfect isometries preserve the isomorphism type of $ZB$, we may assume that $B$ is the principal block of $FH$ where $H\in\{D_8,Q_8,S_4,\SL(2,3),\GL(3,2)\}$. 
A computation with GAP~\cite{GAP48} reveals that $LL(ZB)\le 3$ in all cases. Note that we may work over the field with two elements, since the natural structure constants of $ZFH$ (and of $ZB$) lie in the prime field of $F$. (The fusion system corresponding to $H=\GL(3,2)$ can be handled alternatively with \autoref{cz}.)
If $D\cong M_{2^d}$ with $d\ge 4$, then $B$ is nilpotent (\cite[Theorem~8.1]{habil}) and the result follows from the remark after \autoref{nilpotent}.

Now assume that $p>2$. By \cite{WatanabeMNA}, $B$ is perfectly isometric to its Brauer correspondent in $\N_G(D)$. Hence, we may assume that $D\unlhd G$. It is known that $B$ has cyclic inertial quotient $I(B)$ of order dividing $p-1$ (see \cite[proof of Theorem~8.8]{habil}). Hence, by \cite{Kuelshammer} we may assume that $G=D\rtimes I(B)$. Then $G$ has only one block and $ZB=ZFG$. Moreover, $l(B)=|I(B)|$. 
After conjugation, we may assume that $I(B)=\langle a\rangle$ acts non-trivially on $\langle x\rangle$ and trivially on $\langle y\rangle$. Since $|D'|=p$, the conjugacy classes of $D$ are either singletons in $\Z(D)$ or cosets of $D'$. Some of these classes are fused in $G$. The classes in $G\setminus D$ are cosets of $\langle x\rangle$. 
As usual, $ZFG$ is generated by the class sums and $JZFG$ is the augmentation ideal (intersected with $ZFG$). In particular, $JZFG$ contains the class sums of conjugacy classes whose length is divisible by $p$. Let $U_1,\ldots,U_k$ be the non-trivial orbits of $I(B)$ on $\Z(D)$. Then $JZFG$ also contains the sums $l(B)1_G-U_i^+$ for $i=1,\ldots,k$. For $u,v\in D$ we have
\begin{align*}
u(D')^+\cdot v(D')^+&=uv((D')^+)^2=0,\\
u(D')^+\cdot v\langle x\rangle^+&=uv(D')^+\langle x\rangle^+=0,\\
u\langle x\rangle^+\cdot v\langle x\rangle^+&=uv(\langle x\rangle^+)^2=0,\\
u(D')^+\cdot(l(B)1_G-U_i^+)&=l(B)u(D')^+-l(B)u(D')^+=0,\\
u\langle x\rangle^+\cdot (l(B)1_G-U_i^+)&=l(B)u\langle x\rangle^+-l(B)u\langle x\rangle^+=0.
\end{align*}
It follows that $(JZFG)^2=(JZF\langle x,a\rangle)^2$. Now the claim can be shown with \cite[Corollary~2.8]{KKS}.
\end{proof}

\begin{Lemma}\label{p3+}
Let $B$ be a block of $FG$ with defect group $D\cong W_{p^d}$. Then $LL(ZB)\le p^{d-1}-p+1$.
\end{Lemma}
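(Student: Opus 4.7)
The plan is to apply Otokita's inductive bound~\eqref{otometh} and verify that every non-trivial $B$-subsection $(u,b)$ satisfies $(|\langle u\rangle|-1)LL(Z\overline{b}) \le p^{d-1}-p$. Up to conjugacy we take $u\in D = W_{p^d}$. The structural facts needed are that $\Z(D) = \langle x\rangle$ is cyclic of order $p^{d-2}$, and for each $u\in D\setminus\Z(D)$ the centralizer $\C_D(u)$ is abelian of order $p^{d-1}$, isomorphic to $C_{p^{d-2}}\times C_p$ (since $D/\Z(D)\cong C_p\times C_p$ and $\C_D(u)$ properly contains $\Z(D)$). Consequently, in the non-central case $\overline{b}$ automatically has abelian defect group $\overline{D} := \C_D(u)/\langle u\rangle$, to which \autoref{thm} applies.

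For $u \in D \setminus \Z(D)$, writing $u$ in terms of the generators of $\C_D(u)$ and applying elementary divisor theory shows that $\overline{D}$ is cyclic of order $p^{d-1}/|\langle u\rangle|$. Therefore $LL(Z\overline{b}) \le LL(F\overline{D}) = |\overline{D}|$, and the contribution equals $(|\langle u\rangle|-1)|\overline{D}|+1 = p^{d-1} - |\overline{D}| + 1 \le p^{d-1} - p + 1$, since $|\overline{D}| \ge p$.

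For $u \in \Z(D) \setminus \{1\}$, write $u = x^{p^j}$ with $0 \le j \le d-3$; then $\overline{D}$ is abelian of type $(p^j,p,p)$, or $(p,p)$ when $j=0$. A straightforward estimate confirms that \autoref{thm} already yields the required bound whenever $j \ge 1$. The only delicate case is $j = 0$, where $u$ generates $\Z(D)$, $|\langle u\rangle| = p^{d-2}$, and $\overline{D} \cong C_p\times C_p$; here \autoref{thm} alone gives only $LL(Z\overline{b}) \le 2p-1$, whereas we need $LL(Z\overline{b}) \le p$. If $B$ is nilpotent, the remark after \autoref{nilpotent} already yields $LL(ZB) = LL(F\Z(D)) = p^{d-2} < p^{d-1}-p+1$. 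Otherwise $\overline{b}$ inherits a non-trivial inertial quotient $I(\overline{b}) \le \GL_2(p)$, and combining \autoref{cz} with \autoref{lemfixed} (or \autoref{cor} when $I(\overline{b})$ acts semiregularly on $\overline{D}$) produces the required $LL(Z\overline{b}) \le p$.

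The main obstacle is the central subsection $u = x$ with non-nilpotent $B$: arguing that the non-trivial action of $I(\overline{b})$ on $\overline{D} \cong C_p\times C_p$ is strong enough to force $LL(Z\overline{b}) \le p$ requires a short case analysis of the $p'$-subgroups of $\GL_2(p)$ that can arise as $I(\overline{b})$ for blocks of this provenance, using the fixed-point bound from \autoref{lemfixed}.
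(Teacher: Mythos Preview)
Your treatment of the non-central subsections and of the central ones with $\langle u\rangle < \Z(D)$ is correct and matches the paper. The gap is in the remaining case $\langle u\rangle=\Z(D)$: your dichotomy ``$B$ nilpotent'' versus ``$I(\overline{b})\ne 1$'' is not exhaustive. Since $I(\overline{b})\cong\C_{I(B)}(u)$, the second alternative fails precisely when $I(B)$ acts faithfully on the cyclic group $\Z(D)$, and this does occur for non-nilpotent $B$ (take for instance $G=D\rtimes C_r$ with $r\mid p-1$ acting as $x\mapsto x^k$, $y\mapsto y$, $z\mapsto z^k$). In that situation $\overline{b}$ has defect $C_p\times C_p$ and trivial inertial quotient, hence is nilpotent, so $LL(Z\overline{b})=2p-1$ and the Otokita bound gives only $(p^{d-2}-1)(2p-1)+1=2p^{d-1}-p^{d-2}-2p+2$, which is far too large.

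The paper avoids this by splitting according to whether $I(B)$ is faithful on $\Z(D)$. In the faithful case it abandons \eqref{otometh} altogether and uses \autoref{bound} instead: for every $u\in\Z(D)\setminus\{1\}$ one has $\C_{I(B)}(u)=1$, and after checking that $b$ is controlled one gets that $b$ (with defect group $D$) is nilpotent, whence $LL(Zb/Rb)\le LL(ZFD)\le LL(F\Z(D))=p^{d-2}$ by \autoref{nilpotent}; combined with the bound $p^{d-2}+p-2$ for non-central $u$, \autoref{bound} then yields $LL(ZB)\le p^{d-2}+p-1$. Only in the non-faithful case does the paper use \eqref{otometh}, and there the key point your sketch leaves implicit is that $I(\overline{b})=\C_{I(B)}(u)$ centralises $[y,z]\in\Z(D)$, forcing $I(\overline{b})\le\SL(2,p)$ rather than merely $\GL(2,p)$; this is what guarantees the semiregular action needed to invoke \autoref{cor} and obtain $LL(Z\overline{b})\le p$. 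For a generic non-trivial $p'$-subgroup of $\GL(2,p)$ with fixed points, the combination of \autoref{cz} and \autoref{lemfixed} does not give $LL(Z\overline{b})\le p$.
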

\begin{proof}
If $|D|=8$, then the claim holds by \autoref{p3-}. Hence, we may exclude this case in the following.
We consider $B$-subsections $(u,b)$ with $1\ne u\in D$. As usual, we may assume that $b$ has defect group $\C_D(u)$. 

Suppose first that $I(B)$ acts faithfully on $\Z(D)$. We apply \autoref{bound}. 
If $u\notin\Z(D)$, then $\C_D(u)\cong C_{p^{d-2}}\times C_p$. Thus, \autoref{thm} implies $LL(Zb/Rb)\le p^{d-2}+p-2$. 
Now assume that $u\in\Z(D)$. The centric subgroups in the fusion system of $b$ are maximal subgroups of $D$. In particular, they are abelian of rank $2$. Now by \cite[Proposition~6.11]{habil}, it follows that $b$ is a controlled block. Since $I(b)\cong\C_{I(B)}(u)=1$, $b$ is nilpotent and $Zb\cong ZFD$. 
By \autoref{nilpotent} we obtain $LL(Zb/Rb)\le LL(Zb)=LL(ZFD)\le LL(F\Z(D))=p^{d-2}$.
Hence, \autoref{bound} gives
\[LL(ZB)\le LL(ZB/RB)+1\le p^{d-2}+p-1\le p^{d-1}-p+1.\] 

Now we deal with the case where $I(B)$ is non-faithful on $\Z(D)$. We make use of \eqref{otometh}. Let $|\langle u\rangle|=p^s$. The dominated block $\overline{b}$ has defect group $\C_D(u)/\langle u\rangle$. If $u\notin\Z(D)$, then $\lvert\C_D(u)/\langle u\rangle\rvert=p^{d-s-1}\ge p$ and
\[(p^s-1)LL(Z\overline{b})\le(p^s-1)p^{d-s-1}\le p^{d-1}-p.\] 
Next suppose that $u\in\Z(D)$. Then $D'\subseteq\langle u\rangle$ and $\overline{b}$ has defect group $D/\langle u\rangle\cong C_{p^{d-s-2}}\times C_p\times C_p$. In case $\langle u\rangle<\Z(D)$, we have $s\le d-3$ and \autoref{thm} implies
\[(p^s-1)LL(Z\overline{b})\le(p^s-1)(p^{d-s-2}+2p-2)\le p^{d-2}+2p^{d-2}-2p^{d-3}-3p+2\le p^{d-1}-p.\]
Finally, assume that $\langle u\rangle=\Z(D)$.  
By \cite[Lemma~3]{SambaleC4}, we have
\[I(\overline{b})\cong I(b)\cong\C_{I(B)}(u)\ne 1.\]
We want to show that $I(\overline{b})$ acts semiregularly on $D/\Z(D)$. 
Let $D=\langle x,y,z\rangle$ as in \autoref{lemtypes}, and let $\gamma\in I(\overline{b})$. Then $y^\gamma\equiv y^iz^j\pmod{\Z(D)}$ and $z^\gamma\equiv y^kz^l\pmod{\Z(D)}$ for some $i,j,k,l\in\mathbb{Z}$. Since $D$ has nilpotency class $2$, we have
\[[y,z]=[y,z]^\gamma=[y^\gamma,z^\gamma]=[y^iz^j,y^kz^l]=[y,z]^{il-jk}.\]
It follows that $il-jk\equiv 1\pmod{p}$ and $I(\overline{b})\le\SL(2,p)$. As a $p'$-subgroup of $\SL(2,p)$, $I(\overline{b})$ acts indeed semiregularly on $D/\Z(D)$. Thus, \autoref{cor} shows that 
\[(p^s-1)LL(Z\overline{b})\le (p^{d-2}-1)p=p^{d-1}-p\]
Therefore, the claim follows from \eqref{otometh}.
\end{proof}

We do not expect that \autoref{p3+} is sharp. In fact, Jennings's Theorem~\cite{Jennings} shows that $LL(FW_{p^3})=4p-3$. Even in this small case the perfect isometry classes are not known (see for example \cite{ExtraspecialExpp}). 

We are now in a position to deal with all non-abelian defect groups.

\begin{Theorem}\label{nonabel}
Let $B$ be a block of $FG$ with non-abelian defect group of order $p^d$. Then 
\[LL(ZB)\le p^{d-1}+p^{d-2}-p^{d-3}.\]
\end{Theorem}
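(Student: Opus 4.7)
I would proceed by induction on $d = \log_p|D|$. The base case $d=3$ is covered entirely by \autoref{p3-} and \autoref{p3+}: the non-abelian groups of order $p^3$ are exhausted by $M_{p^3}$ (which coincides with $D_8$ when $p=2$), $W_{p^3}$, and $Q_8$, and each of the resulting bounds ($p^{d-2}=p$, $p^{d-1}-p+1 = p^2-p+1$, or $3$ in the case $|D|=8$) is dominated by $p^2+p-1 = p^{d-1}+p^{d-2}-p^{d-3}$.

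For the inductive step $d\ge 4$, I first dispatch the special groups: if $D\cong M_{p^d}$ or $D\cong W_{p^d}$, then \autoref{p3-} or \autoref{p3+} yields $LL(ZB)\le p^{d-2}$ or $LL(ZB)\le p^{d-1}-p+1$ respectively, each safely bounded by the target. Assume henceforth that $D\not\cong M_{p^d}, W_{p^d}$. I then apply \eqref{otometh}: it suffices to bound $(p^s-1)LL(Z\overline{b})+1$ for every non-trivial $B$-subsection $(u,b)$, where $p^s=|\langle u\rangle|$ and $\overline{b}$ has defect group $\C_D(u)/\langle u\rangle$ of order at most $p^{d-s}$.

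When $u\notin\Z(D)$ this defect group has order at most $p^{d-s-1}$; bounding $LL(Z\overline{b})$ by \autoref{thm} when $\C_D(u)/\langle u\rangle$ is abelian and by the induction hypothesis otherwise produces the desired estimate through straightforward arithmetic. When $u\in\Z(D)$ the defect group is $D/\langle u\rangle$ of order $p^{d-s}$; it cannot be cyclic, since otherwise $D=\langle u,x\rangle$ would be abelian, so its rank is at least $2$. The key structural input is that $|D/\langle u\rangle|=p^2$ is impossible under our standing hypothesis: this would force $\Z(D)=\langle u\rangle$ cyclic of order $p^{d-2}$ with $|D:\Z(D)|=p^2$, and \autoref{lemtypes} would then identify $D$ with one of $M_{p^d}$, $W_{p^d}$, $Q_8$ (and $Q_8$ is excluded by $d\ge 4$), contradicting our case assumption. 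Hence $|D/\langle u\rangle|\ge p^3$ and $s\le d-3$. I then invoke \autoref{thm} when $D/\langle u\rangle$ is abelian, using the rank-$\ge 2$ estimate $LL(F(D/\langle u\rangle))\le p^{d-s-1}+p-1$, and the induction hypothesis when $D/\langle u\rangle$ is non-abelian, giving $LL(Z\overline{b})\le p^{d-s-1}+p^{d-s-2}-p^{d-s-3}$. In both subcases the quantity $(p^s-1)LL(Z\overline{b})+1$ is verified to be at most $p^{d-1}+p^{d-2}-p^{d-3}$ by direct expansion.

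The main obstacle is precisely this central-element subsection: the naive bound $(p^s-1)(2p-1)+1$ for $u\in\Z(D)$ with $D/\langle u\rangle\cong C_p\times C_p$ would exceed the target $p^{d-1}+p^{d-2}-p^{d-3}$ for most pairs $(p,d)$. Only the rigidity of \autoref{lemtypes}, which confines such configurations to the groups already handled by \autoref{p3-} and \autoref{p3+}, allows the inductive step to proceed; once this exclusion is noted, the remaining calculations are routine manipulations of the formula $LL(FP)=\sum p^{a_i}-r+1$ for abelian $p$-groups $P$ of type $(p^{a_1},\ldots,p^{a_r})$.
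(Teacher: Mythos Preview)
Your argument is correct and follows essentially the same strategy as the paper's proof: induction on $d$ via \eqref{otometh}, with \autoref{thm} controlling the abelian quotients, the inductive hypothesis controlling the non-abelian ones, and \autoref{lemtypes} together with \autoref{p3-} and \autoref{p3+} handling the dangerous configuration where the quotient has order $p^2$. The only difference is organisational: you treat $D\cong M_{p^d}$ and $D\cong W_{p^d}$ at the outset and then split according to whether $u\in\Z(D)$, whereas the paper splits according to whether $\C_D(u)/\langle u\rangle$ is cyclic, abelian of rank $\ge 2$, or non-abelian, and invokes \autoref{lemtypes} only when the case $s=d-2$ arises (which, as you observe, forces $u\in\Z(D)$ and $\Z(D)=\langle u\rangle$). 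The substantive content and the arithmetic are the same.
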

\begin{proof}
We argue by induction on $d$. Let $D$ be a defect group of $B$. 
Again we will use \eqref{otometh}. Let $(u,b)$ be a $B$-subsection with $u\in D$ of order $p^s\ne 1$. As before, we may assume that the dominated block $\overline{b}$ has defect group $\C_D(u)/\langle u\rangle$. 
If $\C_D(u)/\langle u\rangle$ is cyclic, then $\C_D(u)$ is abelian and therefore $\C_D(u)<D$. Hence, 
\[(p^s-1)LL(Z\overline{b})\le(p^s-1)p^{d-s-1}\le p^{d-1}-1\le p^{d-1}+p^{d-2}-p^{d-3}-1.\]

Suppose next that $\C_D(u)/\langle u\rangle$ is abelian of type $(p^{a_1},\ldots,p^{a_r})$ with $r\ge 2$. 
If $s=d-2$, then $D$ fulfills the assumption of \autoref{lemtypes}. Hence, by \autoref{p3-} and \autoref{p3+} we conclude that \[LL(ZB)\le p^{d-1}-p+1\le p^{d-1}+p^{d-2}-p^{d-3}.\] 
Consequently, we can restrict ourselves to the case $s\le d-3$.
\autoref{thm} shows that 
\[
LL(Z\overline{b})\le p^{a_1}+\ldots+p^{a_r}-r+1\le p^{a_1+\ldots+a_{r-1}}+p^{a_r}-1\le\frac{\lvert\C_D(u)\rvert}{p^{s+1}}+p-1.
\]
Hence, one gets
\[(p^s-1)LL(Z\overline{b})\le(p^s-1)(p^{d-s-1}+p-1)\le p^{d-1}+p^{s+1}-p^s-1\le p^{d-1}+p^{d-2}-p^{d-3}-1.\]

It remains to consider the case where $\C_D(u)/\langle u\rangle$ is non-abelian. Here induction gives
\[(p^s-1)LL(Z\overline{b})\le(p^s-1)(p^{d-s-1}+p^{d-s-2}-p^{d-s-3})\le p^{d-1}+p^{d-2}-p^{d-3}-1.\]
Now the claim follows with \eqref{otometh}.
\end{proof}

Doing the analysis in the proof more carefully, our bound can be slightly improved, but this does not affect the order of magnitude. Note also that \autoref{nonabel} improves Eq.~\eqref{otokita} even in case $p=2$, because then $\exp(D)\ge 4$. Nevertheless, we develop a stronger bound for $p=2$ in the following.
We begin with the $2$-blocks of defect $4$. The definition of the minimal non-abelian group $MNA(2,1)$ can be found in \cite[Theorem~12.2]{habil}. 
The following proposition covers all non-abelian $2$-groups of order $16$. 

\begin{Proposition}\label{d4}
Let $B$ be a block of $FG$ with defect group $D$. Then 
\[LL(ZB)\le\begin{cases}
3&\text{if }D\cong C_4\rtimes C_4,\\
4&\text{if }D\in\{M_{16},\,D_8\times C_2,\,Q_8\times C_2,\,MNA(2,1)\},\\
5&\text{if }D\in\{D_{16},\,Q_{16},\,SD_{16},\,W_{16}\}.
\end{cases}\]
In all cases we have $LL(ZB)\le LL(FD)$.
\end{Proposition}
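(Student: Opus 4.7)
Plan: We handle each of the nine non-abelian $2$-groups $D$ of order $16$ separately, following the template of the proof of \autoref{p3-}. The case $D\cong M_{16}$ is immediate from \autoref{p3-}, which with $d=4$ gives $LL(ZB)\le 2^{d-2}=4$. For every nilpotent block $B$ with any of the remaining eight defect groups we have $ZB\cong ZFD$ by Broué-Puig, and \autoref{nilpotent} then yields $LL(ZB)\le LL(F[D'\Z(D)])$; a direct inspection shows $D'\Z(D)\cong C_4$ for $D\in\{D_{16},SD_{16},Q_{16},W_{16}\}$ (hence $LL\le 4$) and $D'\Z(D)$ elementary abelian of order at most $4$ for $D\in\{C_4\rtimes C_4,D_8\times C_2,Q_8\times C_2,MNA(2,1)\}$ (hence $LL\le 3$). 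In each case the nilpotent contribution already meets the stated bound, so we are reduced to the non-nilpotent blocks.

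For the non-nilpotent blocks we invoke the classification of fusion systems on non-abelian $2$-groups of order $16$ from \cite[Chapter~12]{habil}; all of them are non-exotic. For each non-trivial fusion system $\mathcal{F}$ on $D$ we exhibit a concrete finite group $H$ with $D\in\Syl_2(H)$ realizing $\mathcal{F}$ — typical examples are $\PSL(2,q)$, $\PGL(2,q)$, $\Sz(8)$, $A_6$, $A_7$, or semidirect products of $D$ with an elementary abelian group of odd order. The Cabanes-style result cited in the proof of \autoref{p3-} guarantees that $\mathcal{F}$ determines the perfect isometry class of $B$ for these small defect groups, and since perfect isometries preserve the isomorphism type of $ZB$ as an $F$-algebra, we may replace $B$ by the principal block of $FH$. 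A GAP~\cite{GAP48} computation — performed over $\mathbb{F}_2$ since the natural structure constants of $ZFH$ lie in the prime field — then yields $LL(ZB)$ in each remaining case and confirms the stated bound.

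The main obstacle will be $D\cong W_{16}$: \autoref{p3+} only delivers $LL(ZB)\le p^{d-1}-p+1=7$, considerably above the required value $5$. To bridge the gap I plan to apply \autoref{bound} subsection by subsection, using that for every $1\ne u\in W_{16}\setminus\Z(D)$ the centralizer $\C_D(u)$ is abelian (so \autoref{thm} sharpens the contribution of that subsection) while for $u\in\Z(D)$ the dominated block $\overline{b}$ has abelian defect group of rank at most $2$ — any residual cases are settled by a direct GAP calculation on the short list of realizing groups. A secondary difficulty is the comparatively rich fusion-system landscape on $D_8\times C_2$ and $Q_8\times C_2$, where each of the several non-nilpotent fusion systems must be paired with a small realizing group before GAP can be invoked; here the recent Morita-equivalence classifications for defect groups of order $16$ will be used to keep the list of groups manageable.
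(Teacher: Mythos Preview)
Your overall strategy---handle nilpotent blocks via Brou\'e--Puig and \autoref{nilpotent}, then reduce non-nilpotent blocks to explicit small groups via the fusion-system classification and compute with GAP---matches the paper for $M_{16}$, $C_4\rtimes C_4$, $D_{16}$, $Q_{16}$, $SD_{16}$ and $MNA(2,1)$ (the paper uses \cite{Sambalemna3} rather than \cite{Cabanes} for the last of these, but the idea is the same).

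The genuine gap is in the three remaining cases $D_8\times C_2$, $Q_8\times C_2$, $W_{16}$. The Cabanes--Picaronny theorem you invoke only covers (generalized) dihedral and quaternion defect groups; there is no result in the literature saying that the fusion system determines the perfect isometry class for these three defect groups, and the paper explicitly avoids claiming it. Instead the paper handles them by an entirely different route: one enumerates the finitely many possible generalized decomposition matrices up to basic sets (as in \cite{SambalekB2}) and then reads off the isomorphism type of $ZB$ from Puig's structure theorem for the center \cite{Puigcenter}. Your proposed workarounds do not close the gap. For $W_{16}$, mixing \autoref{bound} with dominated-block estimates is incoherent: \autoref{bound} requires bounding $LL(Zb/Rb)$ for every non-trivial subsection, and when $u\in\Z(D)$ the block $b$ again has defect group $W_{16}$, so you are back where you started; switching to $\overline{b}$ there belongs to the Otokita inequality \eqref{otometh}, which gives only $(4-1)\cdot 3+1=10$. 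And ``GAP on the short list of realizing groups'' presupposes precisely the perfect-isometry reduction you do not have. The appeal to ``recent Morita-equivalence classifications'' for $D_8\times C_2$ and $Q_8\times C_2$ is likewise too vague to stand in for an argument; the paper's decomposition-matrix-plus-Puig method is what actually does the work here.
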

\begin{proof}
The case $D\cong M_{16}$ has already been done in \autoref{p3-}.
For the metacyclic group $D\cong C_4\rtimes C_4$, $B$ is nilpotent (see \cite[Theorem~8.1]{habil}) and the result follows from \autoref{nilpotent}. For the dihedral, quaternion, semidihedral and minimal non-abelian groups the perfect isometry class is uniquely determined by the fusion system of $B$ (see \cite{Cabanes,Sambalemna3}). Moreover, all these fusion systems are non-exotic (see \cite[Theorem~10.17]{habil}). In particular, $LL(ZB)\le LL(ZFH)$ for some finite group $H$. More precisely, if $B$ is non-nilpotent, we may consider the following groups $H$:
\begin{itemize}
\item $\PGL(2,7)$ and $\PSL(2,17)$ if $D\cong D_{16}$,
\item $\SL(2,7)$ and $\texttt{SmallGroup}(240,89)\cong 2.S_5$ if $D\cong Q_{16}$,
\item $M_{10}$ (Mathieu group), $\GL(2,3)$ and $\PSL(3,3)$ if $D\cong SD_{16}$,
\item $\texttt{SmallGroup}(48,30)\cong A_4\rtimes C_4$ if $D\cong MNA(2,1)$.
\end{itemize}
For all these groups $H$ the number $LL(ZFH)$ can be determined with GAP~\cite{GAP48}. 

Finally, for $D\in\{D_8\times C_2,\,Q_8\times C_2,\,W_{16}\}$ one can enumerate the possible generalized decomposition matrices of $B$ up to basic sets (see \cite[Propositions~3, 4 and 5]{SambalekB2}). In each case the isomorphism type of $ZB$ can be determined with a result of Puig~\cite{Puigcenter}. We omit the details. Observe that we improve \autoref{p3+} for $D\cong W_{16}$. Finally, the claim $LL(ZB)\le LL(FD)$ can be shown with Jennings's Theorem~\cite{Jennings} or one consults \cite[Corollary~4.2.4 and Table~4.2.6]{KarpilovskyJ}.
\end{proof}

Next we elaborate on \autoref{lemtypes}.

\begin{Lemma}\label{lem}
Let $B$ be a $2$-block of $FG$ with non-abelian defect group $D$ such that there exists a $z\in\Z(D)$ with $D/\langle z\rangle\cong C_{2^n}\times C_2$ where $n\ge 2$. Then $LL(ZB)<|D|/2$.
\end{Lemma}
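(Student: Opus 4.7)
The plan is to argue by induction on $|D|$ using Otokita's inequality~\eqref{otometh} combined with the previously established results in the paper. The hypothesis forces $|D|\ge 16$ (since $|D/\langle z\rangle|=2^{n+1}\ge 8$ and $D$ is non-abelian), and the case $|D|=16$ is covered by \autoref{d4}. More generally, whenever $D\cong M_{2^d}$ we have $LL(ZB)\le 2^{d-2}<|D|/2$ by \autoref{p3-}. Any $2$-group with an element of order $|D|/2$ has a cyclic subgroup of index $2$, and among the non-abelian candidates $D_{2^d}$, $Q_{2^d}$, $SD_{2^d}$, $M_{2^d}$ the only one with $D/\Z(D)$ abelian---which is forced by the existence of a central $z$ with abelian $D/\langle z\rangle$---is $M_{2^d}$. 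Hence I may assume $|D|\ge 32$, $D\not\cong M_{2^d}$, and $\exp(D)\le|D|/4$. Moreover, since $D/\Z(D)$ is a quotient of $D/\langle z\rangle\cong C_{2^n}\times C_2$, the alternating commutator pairing $D/\Z(D)\times D/\Z(D)\to D'$ factors through a pairing whose second argument has order dividing~$2$, so $|D'|\le 2$ and hence $|D'|=2$; in particular, $D$ has nilpotency class~$2$.

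For the inductive step I would apply \eqref{otometh} and bound $(2^s-1)LL(Z\overline b)\le|D|/2-2$ for every $B$-subsection $(u,b)$ with $|u|=2^s$. When $u\notin\Z(D)$, we have $|\C_D(u)|\le|D|/2$ and $s\le d-2$, so Okuyama's bound $LL(Z\overline b)\le|\C_D(u)/\langle u\rangle|\le|D|/2^{s+1}$ yields $(2^s-1)LL(Z\overline b)\le|D|/2-|D|/2^{s+1}\le|D|/2-2$. When $u\in\langle z\rangle$ and $\langle u\rangle\not\supseteq D'$, the quotient $D/\langle u\rangle$ is non-abelian, inherits the lemma's hypothesis with central cyclic subgroup of order $2^{k-s}$, and the induction hypothesis gives $LL(Z\overline b)<|D|/2^{s+1}$. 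When $u\in\Z(D)$ and $D/\langle u\rangle$ is abelian, \autoref{thm} bounds $LL(Z\overline b)$ by $LL(F(D/\langle u\rangle))$; since $D/\langle u\rangle$ has at most three cyclic factors and is governed by the quotient $D/\langle z\rangle\cong C_{2^n}\times C_2$ together with $D'$, a direct estimate of the abelian type produces the required bound, at least outside the borderline instance discussed next.

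The critical case is the subsection $u=z$ itself, where $\overline b$ has abelian defect group $C_{2^n}\times C_2$; because $\Aut(C_{2^n}\times C_2)$ is a $2$-group, $\overline b$ has trivial inertial quotient, is therefore nilpotent, and $LL(Z\overline b)=2^n+1$ exactly. The resulting contribution $(2^k-1)(2^n+1)+1=2^{n+k}+2^k-2^n$ is strictly less than $|D|/2$ precisely when $k<n$. The hard part is the regime $k\ge n$, where \eqref{otometh} on its own is not sharp enough. My plan here is to exploit the strong structural constraints imposed by the hypothesis together with $D\not\cong M_{2^d}$ and $k\ge n$: these restrict $D$ to a narrow family of isomorphism types in which the distinguished generators of $D$ have pairwise distinct orders modulo $\Phi(D)$, forcing $\Aut(D)$ itself to be a $2$-group and hence every block with defect group $D$ to be nilpotent. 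For such a nilpotent $B$, \autoref{nilpotent} together with the class-$2$ remark following it gives $LL(ZB)=LL(ZFD)=LL(F\Z(D))\le|\Z(D)|\le|D|/4<|D|/2$ (using that $|D/\Z(D)|\ge 4$ for non-abelian $D$). Completing this structural classification in the borderline regime is the technical heart of the argument and the main obstacle.
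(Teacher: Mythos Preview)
Your strategy diverges substantially from the paper's and carries genuine gaps beyond the one you flag.

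First, the paper does not use \eqref{otometh} or induction on $|D|$ at all.  It argues structurally: from the hypothesis one gets two abelian maximal subgroups whose intersection is $\Z(D)$, so $|D:\Z(D)|=4$ and $|D'|=2$; after \autoref{lemtypes} one may take $\Z(D)$ of rank~$2$.  The nilpotent case is then dispatched by the Koshitani--Motose bound $LL(FD)<|D|/2$.  In the non-nilpotent case the paper distinguishes $\Z(D)=\Phi(D)$ (forcing $D\cong MNA(r,1)$, handled via the known isotypy with the principal block of $A_4\rtimes C_{2^r}$ together with Passman's radical theorem) from $|D:\Phi(D)|=8$, where a short fusion-system argument shows that no $\mathcal{F}$-essential subgroup can exist and that $\Aut(D)$ is a $2$-group, so no non-nilpotent block occurs.

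Second, your subsection case analysis is incomplete.  The case ``$u\in\langle z\rangle$ with $\langle u\rangle\not\supseteq D'$'' is vacuous: since $D/\langle z\rangle$ is abelian we have $D'\le\langle z\rangle$, and $D'$ is the unique involution in the cyclic group $\langle z\rangle$, hence lies in every non-trivial subgroup of $\langle z\rangle$.  So your proposed induction step never fires.  Moreover you do not treat $u\in\Z(D)\setminus\langle z\rangle$ with $D/\langle u\rangle$ non-abelian; there the image of $z$ need not produce a quotient of shape $C_{2^{n'}}\times C_2$ with $n'\ge 2$ (it may be cyclic), so the inductive hypothesis can fail.  The vaguer abelian-quotient case (``a direct estimate of the abelian type'') also needs an actual argument, since $D/\langle u\rangle$ can have three cyclic factors and the Loewy length then depends on their sizes.

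Third, your proposed fix for the borderline $k\ge n$ is not sufficient even in outline.  Showing that $\Aut(D)$ is a $2$-group only gives $I(B)=1$; it does \emph{not} force $B$ to be nilpotent, because the fusion system of $B$ may still contain essential subgroups with non-trivial $p'$-automorphisms.  The paper's treatment of the $|D:\Phi(D)|=8$ case does precisely this extra work---ruling out essential subgroups first, then analysing $\Aut(D)$---and the minimal-non-abelian case $MNA(r,1)$ genuinely admits non-nilpotent blocks, so there is no hope of a blanket ``$B$ is nilpotent'' conclusion in your framework without a comparable fusion analysis.
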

\begin{proof}
By hypothesis there exist two maximal subgroups $M_1$ and $M_2$ of $D$ containing $z$ such that $M_1/\langle z\rangle\cong M_2/\langle z\rangle\cong C_{2^n}$. It follows that $M_1$ and $M_2$ are abelian. Since $D=M_1M_2$, we obtain $\Z(D)=M_1\cap M_2$ and $|D:\Z(D)|=4$. 
This implies $|D'|=2$ (see e.\,g. \cite[Lemma~1.1]{Berkovich1})). Obviously, $D'\le\langle z\rangle$. By \autoref{lemtypes} we may assume that $\Z(D)$ is abelian of rank $2$. 

Suppose for the moment that $B$ is nilpotent. 
Since $\Z(D)$ is not cyclic, $D\not\cong M_{2^m}$ for all $m$. Now a result of Koshitani-Motose~\cite[Theorems~4 and 5]{MotoseK} shows that
\[LL(ZB)=LL(ZFD)\le LL(FD)<\frac{|D|}{2}.\]

For the remainder of the proof we may assume that $B$ is not nilpotent.
Suppose that $\Z(D)=\Phi(D)$. Then $D$ is minimal non-abelian and it follows from \cite[Theorem~12.4]{habil} that $D\cong MNA(r,1)$ for some $r\ge 2$. By \autoref{d4} we can assume that $r\ge 3$.
By the main result of \cite{Sambalemna3}, $B$ is isotypic to the principal block of $H:=A_4\rtimes C_{2^r}$. In particular, $LL(ZB)\le LL(FH)$. Note that $H$ contains a normal subgroup $N\cong C_{2^{r-1}}\times C_2\times C_2$ such that $H/N\cong S_3$ (see \cite[Lemma~2]{Sambalemna3}). By Passman~\cite[Theorem~1.6]{PassmanTw}, $(JFH)^2\subseteq (JFN)(FH)=(FH)(JFN)$. It follows that 
\[LL(FH)\le 2LL(FN)=2(2^{r-1}+2)<2^{r+1}=\frac{|D|}{2}.\] 

Thus, we may assume $\lvert D:\Phi(D)\rvert=8$ in the following. 
Let $\mathcal{F}$ be the fusion system of $B$. Suppose that there exists an $\mathcal{F}$-essential subgroup $Q\le D$ (see \cite[Definition~6.1]{habil}). Then $z\in\Z(D)\le\C_D(Q)\le Q$ and $Q$ is abelian. Moreover, $|D:Q|=2$. 
It is well-known that $\Aut_{\mathcal{F}}(Q)$ acts faithfully on $Q/\Phi(Q)$ (see \cite[p. 64]{habil}).
Since $D/Q\le\Aut_{\mathcal{F}}(Q)$, we obtain $D'\nsubseteq\Phi(Q)$. On the other hand, $z^2\in\Phi(Q)$. This shows that $D'=\langle z\rangle$ and $D/D'$ has rank $2$. However, this contradicts $|D:\Phi(D)|=8$.

Therefore, $B$ is a controlled block and $\Aut(D)$ is not a $2$-group. Let $1\ne\alpha\in\Aut(D)$ be of odd order. 
Then $\alpha$ acts trivially on $D'$ and on $\Omega(\Z(D))/D'$, since $\Z(D)$ has rank $2$. Hence, $\alpha$ acts trivially on $\Omega(\Z(D))$ and also on $\Z(D)$. 
But then $\alpha$ acts non-trivially on $D/\langle z\rangle\cong C_{2^n}\times C_2$ which is impossible. This contradiction shows that there are no more blocks with the desired property.
\end{proof}

\begin{Proposition}\label{large2}
Let $B$ be a $2$-block of $FG$ with non-abelian defect group of order $2^d$. Then $LL(ZB)<2^{d-1}$.
\end{Proposition}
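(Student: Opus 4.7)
The plan is to argue by induction on $d$. The base cases $d\in\{3,4\}$ follow immediately from \autoref{p3-} and \autoref{d4}, which give $LL(ZB)\le 3$ and $LL(ZB)\le 5$, both strictly less than $2^{d-1}$. So I assume $d\ge 5$ in the inductive step.

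I would first dispense with several special situations. For nilpotent $B$, \autoref{nilpotent} gives $LL(ZB)=LL(ZFD)\le LL(FP'\Z(P))\le|P'\Z(P)|$, and since $P/P'\Z(P)$ is the abelianization of the non-cyclic $2$-group $P/\Z(P)$ and therefore non-cyclic of order at least $4$, we have $|P'\Z(P)|\le|P|/4=2^{d-2}<2^{d-1}$. If $D\cong M_{2^d}$ or $D\cong W_{2^d}$, then \autoref{p3-} and \autoref{p3+} respectively give bounds $2^{d-2}$ and $2^{d-1}-1$. If $D\in\{D_{2^d},Q_{2^d},SD_{2^d}\}$ and $B$ is non-nilpotent, then $\Z(D)\cong C_2$ is $I(B)$-invariant so $c=z=2$ in \autoref{cz}; combined with the bound $k(B)-l(B)\le 2^{d-2}+3$ coming from Erdmann's classification of tame blocks, this yields $LL(ZB)\le 2^{d-2}+4<2^{d-1}$.

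For the remaining non-nilpotent $B$ (with $D$ not of the exceptional types above), I would apply Otokita's inductive formula \eqref{otometh}. For each non-trivial $B$-subsection $(u,b)$ with $|\langle u\rangle|=2^s$, set $\overline{D}=\C_D(u)/\langle u\rangle$ of order $2^{d'}\le 2^{d-s}$. If $\overline{D}$ is non-abelian, the inductive hypothesis yields $LL(Z\overline{b})\le 2^{d'-1}-1$, and
\[(2^s-1)(2^{d-s-1}-1)+1\le 2^{d-1}-2^s-2^{d-s-1}+2<2^{d-1}.\]
If $\overline{D}$ is abelian, \autoref{thm} gives $LL(Z\overline{b})\le LL(F\overline{D})$; this suffices in most configurations, and in the borderline cases the structure of $D$ forces a central involution $z$ with $D/\langle z\rangle\cong C_{2^n}\times C_2$ for some $n\ge 2$, whence \autoref{lem} yields $LL(ZB)<|D|/2=2^{d-1}$ directly.

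The main obstacle is the systematic verification that all borderline configurations indeed trigger either the exceptional treatment or the hypothesis of \autoref{lem}. This requires a careful case analysis on the tuple $(s,r,a_1,\ldots,a_r)$ for the type of $\overline{D}$, using crucially that $\overline{D}$ cyclic with $u\in\Z(D)$ would force $D$ to be abelian.
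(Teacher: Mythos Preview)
Your proposal is essentially the paper's proof, just organized a little differently. The paper does not pre-sort the defect groups; it runs \eqref{otometh} directly and lets the exceptional types surface inside the induction: the problematic case $s=d-1$ forces $D$ to have a cyclic maximal subgroup, whence $D\in\{M_{2^d},D_{2^d},Q_{2^d},SD_{2^d}\}$, and these are disposed of via \autoref{p3-}, \autoref{d4}, and the crude bound $LL(ZB)\le k(B)\le 2^{d-2}+5$ from \cite[Theorem~8.1]{habil}. Your alternative of pre-handling these types (and the nilpotent case via \autoref{nilpotent}) is perfectly fine and arguably cleaner, since it guarantees $s\le d-2$ in the remaining induction; your separate nilpotent argument using $\lvert D/D'\Z(D)\rvert\ge 4$ is correct and not in the paper.

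The case analysis you flag as ``the main obstacle'' is precisely what the paper writes out, and it goes as you anticipate. With $s\le d-3$ secured, the only configuration where $(2^s-1)LL(F\overline{D})$ fails to beat $2^{d-1}-2$ is $\overline{D}\cong C_2\times C_{2^{a_2}}$ with $u\in\Z(D)$ (so $\C_D(u)=D$); then $D/\langle u\rangle\cong C_{2^{a_2}}\times C_2$ with $a_2\ge 2$ and \autoref{lem} finishes. One small correction: in \autoref{lem} the element $z$ is merely required to lie in $\Z(D)$, not to be an involution, and in the application one takes $z=u$, which may well have order $2^s>2$. Also, the paper uses the simpler inequality $LL(ZB)\le k(B)$ rather than \autoref{cz} for the tame types, but your route through \autoref{cz} with $c=z=2$ works equally well.
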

\begin{proof}
We mimic the proof of \autoref{nonabel}. Let $D$ be a defect group of $B$, and let $(u,b)$ be a $B$-subsection such that $u$ has order $2^s>1$. It suffices to show that $(2^s-1)LL(Z\overline{b})\le 2^{d-1}-2$. If $\C_D(u)/\langle u\rangle$ is cyclic, then $\C_D(u)$ is abelian and $\C_D(u)<D$. Then we obtain
\[(2^s-1)LL(Z\overline{b})\le(2^s-1)2^{d-s-1}=2^{d-1}-2^{d-s-1}.\]
We may assume that $s=d-1$. Then by \autoref{p3-}, we may assume that $D$ is dihedral, semidihedral or quaternion. Moreover, by \autoref{d4}, we may assume that $d\ge 5$. Then \cite[Theorem~8.1]{habil} implies 
\[LL(ZB)\le\dim ZB=k(B)\le2^{d-2}+5<2^{d-1}.\]

Now suppose that $\C_D(u)/\langle u\rangle$ is abelian of type $(2^{a_1},\ldots,2^{a_r})$ with $r\ge 2$. As in \autoref{nonabel}, we may assume that $s\le d-3$. If $a_1=1$ and $r=2$, then by \autoref{lem}, we may assume that $\C_D(u)<D$. Hence, we obtain 
\[(2^s-1)LL(Z\overline{b})\le (2^s-1)(2^{d-s-2}+1)\le 2^{d-2}+2^{d-3}\le 2^{d-1}-2\]
in this case.
Now suppose that $r\ge 3$ or $a_i>1$ for $i=1,2$. 
If $r=3$ and $a_1=a_2=a_3=1$, we have $(2^s-1)LL(Z\overline{b})\le 2^{d-1}-4$. In the remaining cases we have $s\le d-4$ and
\[(2^s-1)LL(Z\overline{b})\le (2^s-1)(2^{d-s-2}+3)\le 2^{d-2}+3\cdot 2^{d-4}\le 2^{d-1}-2.\]
Finally, suppose that $\C_D(u)/\langle u\rangle$ is non-abelian. Then the claim follows by induction on $d$.
\end{proof}

\begin{Corollary}\label{largeLL}
Let $B$ be a block of $FG$ with defect group $D$. Then $LL(ZB)\ge |D|/2$ if and only if one of the following holds:
\begin{enumerate}[(i)]
\item $D$ is cyclic and $l(B)\le 2$,
\item $D\cong C_{2^n}\times C_2$ for some $n\ge 1$,
\item $D\cong C_2\times C_2\times C_2$ and $B$ is nilpotent,
\item $D\cong C_3\times C_3$ and $B$ is nilpotent.
\end{enumerate}
\end{Corollary}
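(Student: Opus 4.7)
The plan is to prove both implications. For the converse ((i)--(iv) each imply $LL(ZB) \geq |D|/2$) we verify each case using the formulas and reductions established earlier. For the forward direction we combine the upper bounds from \autoref{large2}, \autoref{nonabel}, and \autoref{thm} to restrict $D$ to a short list, and then decide, within each candidate, exactly when the inequality holds.

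For the forward direction, $D$ must first be abelian: if $p = 2$ and $D$ is non-abelian, \autoref{large2} gives $LL(ZB) < 2^{d-1} = |D|/2$; if $p$ is odd with $d \geq 3$, \autoref{nonabel} yields $LL(ZB) \leq p^{d-1} + p^{d-2} - p^{d-3}$, which is strictly less than $p^d/2$ for every odd prime (the tightest case is $p = 3$, where $11 \cdot 3^{d-3} < \tfrac{27}{2} \cdot 3^{d-3}$). So we may assume $D = C_{p^{a_1}} \times \cdots \times C_{p^{a_r}}$ is abelian, and by \autoref{thm}, $p^{a_1} + \cdots + p^{a_r} - r + 1 = LL(FD) \geq |D|/2$. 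An elementary case analysis on $r$ and the $a_i$'s leaves (besides cyclic $D$) only the possibilities $D \cong C_{2^n} \times C_2$, $D \cong C_2 \times C_2 \times C_2$, and $D \cong C_3 \times C_3$: for $r = 2$ one forces $p^{a_2} \leq 4$ and then checks the three values $(p,a_2) \in \{(2,1),(2,2),(3,1)\}$; for $r \geq 3$ only $p = 2$ with $r = 3$ and all $a_i = 1$ survives.

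It remains, for each surviving defect type, to decide when $LL(ZB) \geq |D|/2$ actually holds. If $D$ is cyclic, the introduction's formula $LL(ZB) = (|D|-1)/l(B) + 1$ immediately gives $LL(ZB) \geq |D|/2$ iff $l(B) \leq 2$, producing (i). If $D \cong C_{2^n} \times C_2$ with $n \geq 2$, then $\Aut(D)$ has order $2^{n+1}$, forcing $I(B) = 1$, so $B$ is nilpotent and Broué--Puig~\cite{BrouePuig} gives $LL(ZB) = LL(FD) = 2^n + 1 > |D|/2$; for $n = 1$ either $B$ is nilpotent ($LL(ZB) = 3 > 2$) or $I(B) \cong C_3$ acts semiregularly, and \autoref{cor} gives $LL(ZB) = LL(FD^{I(B)}) = 2 = |D|/2$. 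So every such block qualifies, giving (ii). For $D \cong C_2 \times C_2 \times C_2$ or $D \cong C_3 \times C_3$, a nilpotent $B$ yields $LL(ZB) = LL(FD)$ equal to $4 = |D|/2$ or $5 > 9/2$ by Broué--Puig, covering (iii) and (iv); for non-nilpotent $B$ we enumerate the non-trivial $p'$-subgroups $I \leq \Aut(D) \cong \GL(3,2)$ or $\GL(2,3)$, and bound $LL(ZB)$ case by case using \autoref{cor} (in the semiregular cases), the tensor-product decomposition $ZB \cong F\C_D(I) \otimes Z\overline{b}$ to split off the centralizer, \autoref{lemfixed}, and, where needed, the uniform bound from \autoref{cz}. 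In each subcase the resulting estimate is at most $3$ for $D \cong C_2 \times C_2 \times C_2$ and at most $4$ for $D \cong C_3 \times C_3$, both strictly below $|D|/2$, so only the nilpotent blocks survive.

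The main obstacle is this last enumeration, particularly for $D \cong C_3 \times C_3$, where $\GL(2,3)$ has $3'$-subgroups of many orders (up to $16$) and some of them (notably $SD_{16}$) do not act semiregularly on $D \setminus \{1\}$, forcing us to use \autoref{cz} in place of \autoref{cor}. Once these cases are all checked, the forward direction is complete.
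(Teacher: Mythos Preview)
Your overall architecture matches the paper's: reduce to abelian $D$ via \autoref{nonabel} and \autoref{large2}, use \autoref{thm} to cut the abelian list down to the four candidate types, and then decide case by case. The converse direction and the treatment of cyclic $D$ and $D\cong C_{2^n}\times C_2$ are fine.

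The gap is in the final enumeration for $D\cong C_2^3$ and $D\cong C_3^2$. The toolkit you propose (\autoref{cor}, the tensor splitting $ZB\cong F\C_D(I)\otimes Z\overline{b}$, \autoref{lemfixed}, \autoref{cz}) does not close all the non-semiregular cases with $\C_D(I)=1$. Concretely: for $D\cong C_3\times C_3$ with $I(B)\cong C_2\times C_2$ or $D_8$, one has $\C_D(I)=1$ but the action on $D$ is not semiregular, so neither \autoref{cor} nor the tensor splitting applies; and \autoref{cz} only yields $LL(ZB)\le k(B)-l(B)+1$, which for $I(B)\cong C_2\times C_2$ is $7$, far above the target $4$. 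Similarly, for $D\cong C_2^3$ with $I(B)\cong C_7\rtimes C_3$, one has $c=z=2$ and $k(B)-l(B)=l(b_1)=3$, so \autoref{cz} gives only $LL(ZB)\le 4=|D|/2$, not the strict inequality you need. (Incidentally, the case you flag as hardest, $I(B)\cong SD_{16}$, is actually the easy one: there $k(B)-l(B)$ is small.)

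The paper handles exactly these residual cases by invoking external results that go beyond the internal lemmas: Kiyota's determination of $k(B)-l(B)$ for $3$-blocks with defect $C_3\times C_3$, and then isotypy/perfect isometry theorems (\cite{KKL} for $C_2^3$, \cite{SambaleBroue} and \cite{SambaleExc} for $C_3^2$) to transport the computation to the Brauer correspondent in $\N_G(D)$, where $LL(ZB)\le 3$ can be checked directly. Without some input of this kind, the enumeration does not terminate.
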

\begin{proof}
Suppose that $LL(ZB)\ge |D|/2$.
Then by \autoref{nonabel} and \autoref{large2}, $D$ is abelian. If $D$ is cyclic, we have $LL(ZB)=\frac{|D|-1}{l(B)}+1$. If additionally $l(B)\ge 3$, then we get the contradiction $|D|\le 4$. 
Now suppose that $D$ is not cyclic. Then
\[\frac{|D|}{2}\le LL(ZB)\le \frac{|D|}{p}+p-1\]
by \autoref{thm} and we conclude that 
\[p^2\le|D|\le\frac{2p(p-1)}{p-2}.\]
This yields $p\le 3$. Suppose first that $p=3$. Then we have $D\cong C_3\times C_3$ and $5=LL(ZB)\le k(B)-l(B)+1$ by \autoref{cz}. It follows from \cite{Kiyota} that $I(B)\notin\{C_4,C_8,Q_8,SD_{16}\}$ (note that $k(B)-l(B)$ is determined locally). The case $I(B)\cong C_2$ is excluded by \autoref{cor}. Hence, we may assume that $I(B)\in\{C_2\times C_2,D_8\}$. By \cite[Theorem~3]{SambaleBroue} and \cite[Lemma~2]{SambaleExc}, $B$ is isotypic to its Brauer correspondent in $\N_G(D)$. This gives the contradiction $LL(ZB)\le 3$. Therefore, $B$ must be nilpotent and $LL(ZB)=5$.

Now let $p=2$. Then $D$ has rank at most $3$. If the rank is $3$, we obtain $LL(ZB)\le 2^{d-2}+2$ and $d=3$. In this case, $B$ is nilpotent or $I(B)\cong C_7\rtimes C_3$ by \autoref{cor}. By \cite{KKL}, $B$ is isotypic to its Brauer correspondent in $\N_G(D)$. From that we can derive that $B$ is nilpotent and $LL(ZB)=4$.
It remains to handle defect groups of rank $2$. Here, $D\cong C_{2^n}\times C_2$ for some $n\ge 1$. If $n\ge 2$, then $B$ is always nilpotent and $LL(ZB)=2^n+1$. If $n=1$, then both possibilities $l(B)\in\{1,3\}$ give $LL(ZB)\ge 2$. 

Conversely, we have seen that all our examples actually satisfy $LL(ZB)\ge|D|/2$.
\end{proof}

The following approach gives more accurate results for a given arbitrary defect group.
For a finite $p$-group $P$ we define a recursive function $\mathcal{L}$ as follows:
\[\mathcal{L}(P):=\begin{cases}
p^{a_1}+\ldots+p^{a_r}-r+1&\text{if }P\cong C_{p^{a_1}}\times\ldots\times C_{p^{a_r}},\\
p^{d-2}&\text{if }P\cong M_{p^d}\text{ with }p^d\ne 8,\\
p^{d-1}-p+1&\text{if }P\cong W_{p^d}\text{ with }p^d\ne 16,\\
3&\text{if }P\in\{D_8,\,Q_8,\,C_4\rtimes C_4\},\\
4&\text{if }P\in\{D_8\times C_2,\,Q_8\times C_2,\,MNA(2,1)\},\\
5&\text{if }P\in\{D_{16},\,Q_{16},\,SD_{16},\,W_{16}\},\\
\max\bigl\{(|\langle u\rangle|-1)\mathcal{L}(\C_P(u)/\langle u\rangle):1\ne u\in P\bigr\}+1&\text{otherwise}.
\end{cases}\]
Then, by the results above, every block $B$ of $FG$ with defect group $D$ satisfies $LL(ZB)\le\mathcal{L}(D)$. For example, there are only three non-abelian defect groups of order $3^6$ giving the worst case estimate $LL(ZB)\le 287$.

In general, it is difficult to give good lower bounds on $LL(ZB)$ (cf. \cite[Corollary~2.7]{KKS}). Assume for instance that $\mathbb{F}_{p^n}$ is the field with $p^n$ elements and $G=\mathbb{F}_{p^n}\rtimes\mathbb{F}_{p^n}^\times$ for some $n\ge 1$. Then $G$ has only one block $B$ and $k(B)-l(B)=1$. It follows that $LL(ZB)=2$. In particular, the defect of $B$ is generally not bounded in terms of $LL(ZB)$. 

\section*{Acknowledgment}
The second author is supported by the German Research Foundation (project SA 2864/1-1) and the Daimler and Benz Foundation (project 32-08/13).

\end{document}